\documentclass[iccmp]{ipbook}
\usepackage{enumerate}
\usepackage{amsrefs}
\usepackage{amsfonts, amsmath, amssymb, amscd, amsthm, bm, cancel}
\usepackage{url}
\usepackage{graphicx}
\usepackage[linktocpage=true,colorlinks,citecolor=magenta,linkcolor=blue,urlcolor=magenta]{hyperref}
\usepackage{multicol}
\usepackage{comment}

\startlocaldefs

\theoremstyle{plain} \newtheorem{thm}{Theorem}[section]
\theoremstyle{plain} \newtheorem{prop}[thm]{Proposition}
\theoremstyle{plain} 
\theoremstyle{definition} 
\theoremstyle{plain} \newtheorem{cor}[thm]{Corollary}
\theoremstyle{plain} \newtheorem{lem}[thm]{Lemma}
\theoremstyle{plain} 
\theoremstyle{remark} \newtheorem{rmk}[thm]{Remark}
\theoremstyle{plain} \newtheorem{conj}{Conjecture}

\newtheorem*{ack}{Acknowledgments}

\numberwithin{equation}{section}
\allowdisplaybreaks

\newcommand{\divg}{\mathrm{div}}

\newcommand{\la}{\langle}
\newcommand{\ra}{\rangle}

\endlocaldefs

\firstpage{1}
\lastpage{1}

\title[Weinstock inequality in hyperbolic space]{Weinstock inequality in hyperbolic space II}

\author{Pingxin Gu}
\address{Department of Mathematical Sciences, Tsinghua University, Beijing 100084, P.R. China}
\email{\href{gpx21@mails.tsinghua.edu.cn}{gpx21@mails.tsinghua.edu.cn}}

\author{Haizhong Li}
\address{Department of Mathematical Sciences, Tsinghua University, Beijing 100084, P.R. China}
\email{\href{lihz@tsinghua.edu.cn}{lihz@tsinghua.edu.cn}}

\author{Yao Wan}
\address{Department of Mathematics, The Chinese University of Hong Kong, Shatin, N.T., Hong Kong}
\email{\href{mailto:yaowan@cuhk.edu.hk}{yaowan@cuhk.edu.hk}}

\begin{document}

\begin{abstract}
In this paper, we establish the Weinstock inequality for the first non-zero Steklov eigenvalue on star-shaped mean convex domains in hyperbolic space $\mathbb{H}^n$ for $n\geq 3$. We note that when $n\geq 4$, the result was obtained in our previous paper \cite{GLW25}. In particular, when the domain is convex, our result gives an affirmative answer to Open Question 4.27 in \cite{CGGS24} for the hyperbolic case. 
\end{abstract}

\maketitle


\section{Introduction}\label{sec-1}

One of the oldest topics in spectral geometry is shape optimization. We focus on the first non-zero eigenvalue of the Steklov eigenvalue problem, which was first introduced by V. A. Steklov \cite{Ste02} in 1902 during his study of heat conduction. Unlike the classical Dirichlet or Neumann boundary value problems, the Steklov eigenvalue parameter appears in the boundary condition, reflecting not only the interior geometry of the manifold but also the geometric properties of the boundary and their interactions. As a natural extension of classical geometric isoperimetric inequalities \cites{Sch43,Sch39,Oss78,Ber05} to spectral theory, the shape optimization problem for Steklov eigenvalues has gained increasing prominence in geometric analysis. We refer readers to the notable surveys \cites{GP17,CGGS24}. 

Let $\Omega$ be a compact Riemannian manifold with Lipschitz boundary $\partial\Omega$. The Steklov eigenvalue problem on $\Omega$ is given by
\begin{align*}
    \left\{\begin{array}{ll}
    \Delta u=0,&\text{in $\Omega$},\\
    \frac{\partial u}{\partial \nu}=\sigma(\Omega)u,&\text{on $\partial\Omega$},
\end{array}\right.
\end{align*}
where $\Delta$ is the Laplacian operator on $\Omega$, and $\nu$ is the outward unit normal of $\partial\Omega$. We may compute the first non-zero Steklov eigenvalue of $\Omega$ by
\begin{align}\label{def-steklov}
\sigma_1(\Omega)=\min\left\{\frac{\int_{\Omega}|\nabla u|^2 dv}{\int_{\partial\Omega}u^2 d\mu}:\ u\in H^1(\Omega)\backslash \{0\},\int_{\partial\Omega}u d\mu=0\right\},
\end{align}
where $dv$ is the volume element of $\Omega$ and $d\mu$ is the area element of $\partial\Omega$. From an analytical perspective, the Steklov eigenvalues are exactly the eigenvalues of the Dirichlet-to-Neumann (DtN) operator. Specifically, for any given boundary data $f$ on $\partial\Omega$, the DtN operator $\mathcal{D}$ is defined by $\mathcal{D}f = \frac{\partial \hat{u}}{\partial \nu}$, where $\hat{u}$ is the unique harmonic extension of $f$ to the interior $\Omega$ (i.e., $\Delta \hat{u} = 0$ in $\Omega$ and $\hat{u} = f$ on $\partial\Omega$). In this way, the Steklov eigenvalue problem can be equivalently expressed as the boundary eigenvalue equation $\mathcal{D}u = \sigma(\Omega)u$. Since the DtN operator is a first-order elliptic pseudo-differential operator, its principal symbol is largely determined by the local geometry of the boundary \cites{LU89,HL01,GP14,GKL21}, which implies a direct connection between Steklov eigenvalues and boundary geometric quantities.

The earliest isoperimetric type inequality on $\sigma_1(\Omega)$ is the well-known Weinstock's inequality, which states that among simply-connected planar domains $\Omega$ with a fixed perimeter $|\partial\Omega|$, $\sigma_1(\Omega)$ is maximized by a disk; see \cite{Wei54}. Using the existence theorem of proper holomorphic mappings by Ahlfors \cite{Ahl50} and Gabard \cite{Gab06}, Weinstock's result has been widely extended to Riemann surfaces with higher genus and multiple boundary components, see \cites{HP68,HPS75,GP10,FS11,GP12,Kar17,YY17,CEG11,CEG19,Has11,FS16,Kok14,GKL21}. It is also possible to maximize $\sigma_1(\Omega)$ by fixing the volume $|\Omega|$ instead of the perimeter for simply connected domains; see \cite{Esc99}. For higher dimensions, Brock \cite{Bro01} generalized this by proving that a ball minimizes the sum of the reciprocals of the first $n$ Steklov eigenvalues, where topological and connectedness assumptions can indeed be removed, with related stability results established in \cite{BDR12}.

However, the simply-connected assumptions in Weinstock inequality cannot be removed, since the result fails for appropriate spherical shells $\Omega_{\epsilon}=B(1)\backslash \overline{B(\epsilon)}$ (where $B(r)$ denotes the geodesic ball of radius $r$ centered at the origin); see \cite{GP17}. In higher dimensions $n\geq 3$, purely topological constraints are insufficient for spectral rigidity. Fraser and Schoen \cites{FS19,FS20} introduced surgery techniques to construct smooth contractible domains in $\mathbb{R}^n$ whose perimeter-normalized first Steklov eigenvalues are strictly larger than that of a Euclidean ball. Hong \cite{Hon21} further extended this to any finite number of Steklov eigenvalues. 

Nevertheless, when geometric constraints are introduced, rigidity can be recovered. Using the inverse mean curvature flow (IMCF), Bucur, Ferone, Nitsch and Trombetti \cite{BFNT21} proved that

\begin{thm}[\cite{BFNT21}]\label{Thm-BFNT-3.1}
Let $\Omega$ be a bounded convex domain in $\mathbb{R}^n$. Then
\begin{align}\label{eq-thmbfnt}
    \sigma_1(\Omega)|\partial\Omega|^{\frac{1}{n-1}}\leq \sigma_1(B)|\partial B|^{\frac{1}{n-1}},
\end{align}
where $B\subset \mathbb{R}^n$ is a ball, and equality holds if and only if $\Omega$ is a ball.
\end{thm}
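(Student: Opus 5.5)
The plan is the Bucur--Ferone--Nitsch--Trombetti strategy: use the coordinate functions as test functions, and control the resulting boundary geometric quantities with the inverse mean curvature flow.

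\textbf{Step 1 (test functions).} For the unit ball $\sigma_1(B)=1$, with eigenfunctions the coordinates $x_i$; by scaling, $\sigma_1(B_R)=1/R$. Translate $\Omega$ so that $\int_{\partial\Omega}x_i\,d\mu=0$ for every $i$. This is always possible: take the translation that puts the boundary barycenter at the origin. Insert each $x_i$ into \eqref{def-steklov}, use $|\nabla x_i|=1$, and sum over $i$. This gives
\begin{align*}
\sigma_1(\Omega)\int_{\partial\Omega}|x|^2\,d\mu\le n|\Omega|.
\end{align*}
It therefore suffices to prove
\begin{align*}
\frac{n|\Omega|}{\int_{\partial\Omega}|x|^2d\mu}\le \frac{1}{R},\qquad R:=\Big(\frac{|\partial\Omega|}{|\partial B|}\Big)^{\frac1{n-1}},
\end{align*}
that is, $\int_{\partial\Omega}|x|^2d\mu\ge nR|\Omega|$. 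The radius $R$ is chosen so that the ball $B_R$ has the same perimeter as $\Omega$. For that ball, equality holds, since $R^2|\partial B_R|=nR|B_R|$.

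\textbf{Step 2 (isoperimetric reduction).} By the isoperimetric inequality, $|\Omega|\le|B_R|$. Hence it suffices to prove the purely geometric inequality
\begin{align*}
\int_{\partial\Omega}|x|^2\,d\mu\ \ge\ |\partial\Omega|\Big(\frac{|\partial\Omega|}{|\partial B|}\Big)^{\frac{2}{n-1}}=R^2|\partial\Omega|
\end{align*}
for convex $\Omega$, with equality only for balls centered at the origin. This is a weighted isoperimetric inequality. It holds for the boundary-barycenter origin, and in fact for any origin.

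\textbf{Step 3 (the main obstacle: the weighted inequality via IMCF).} Run the inverse mean curvature flow $\partial_t X=\nu/H$ from $\Sigma_0=\partial\Omega$. Since $\Omega$ is convex, $H>0$; by Gerhardt and Urbas, the flow exists for all time, stays smooth and star-shaped, and becomes round after rescaling. Along the flow $|\Sigma_t|=e^t|\Sigma_0|$. Consider
\begin{align*}
Q(t)=|\Sigma_t|^{-\frac{n+1}{n-1}}\int_{\Sigma_t}|x|^2\,d\mu .
\end{align*}
Using $\partial_t d\mu=d\mu$ and $\partial_t|x|^2=2\langle x,\nu\rangle/H$, we get
\begin{align*}
\frac{d}{dt}\int_{\Sigma_t}|x|^2d\mu=\int_{\Sigma_t}\Big(|x|^2+\frac{2\langle x,\nu\rangle}{H}\Big)d\mu .
\end{align*}
The aim is to show $Q'(t)\le0$. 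That requires
\begin{align*}
\int_{\Sigma_t}\frac{2\langle x,\nu\rangle}{H}\,d\mu\le \frac{2}{n-1}\int_{\Sigma_t}|x|^2\,d\mu .
\end{align*}
I would try to derive this from the Minkowski formula $\int H\langle x,\nu\rangle=(n-1)|\Sigma|$ together with the identity $\frac12\Delta_\Sigma|x|^2=(n-1)-H\langle x,\nu\rangle$. Combining these with Cauchy--Schwarz and Heintze--Karcher type bounds such as $\int 1/H\ge \frac{n}{n-1}|\Omega|$ should give the required bound. Convexity is used to guarantee $\langle x,\nu\rangle>0$ after a suitable choice of origin, and to justify integration by parts against $1/H$.

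This derivative estimate is the crux. If it fails in this direct form, I would instead place the monotone quantity on $\int_{\Sigma_t}\langle x,\nu\rangle^{2}$, or use $\int|x|^2\ge \int\langle x,\nu\rangle^2$. Asymptotically, $Q(t)$ converges to its value on round spheres, $|\partial B|^{-\frac{2}{n-1}}$, since the flow translates-invariantly rounds out and the origin's offset becomes negligible relative to the growing radius. Then $Q(0)\ge\lim_{t\to\infty}Q(t)$ gives the inequality of Step 2.

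\textbf{Step 4 (equality).} If equality holds in \eqref{eq-thmbfnt}, then equality holds in the isoperimetric inequality of Step 2, so $\Omega$ is a ball.
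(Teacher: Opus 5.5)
There is a genuine gap, and it occurs in Step 2, before the flow enters. Step 1 is correct: it reduces the theorem to $\int_{\partial\Omega}|x|^2\,d\mu\ge nR|\Omega|$, where $R$ is the perimeter radius. Replacing $|\Omega|$ by $|B_R|$, however, turns this into $\int_{\partial\Omega}|x|^2\,d\mu\ge R^2|\partial\Omega|$, and that inequality is false in general.

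\textbf{Cube.} Take $\Omega=(-1,1)^3$. Then $\int_{\partial\Omega}|x|^2\,d\mu=6\cdot\frac{20}{3}=40$, $|\partial\Omega|=24$ and $R^2=\frac{6}{\pi}$. Hence $R^2|\partial\Omega|=\frac{144}{\pi}\approx45.8>40$.

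\textbf{Cylinder.} For the cylinder of radius $a$ and height $2b$ centred at the origin, $\int_{\partial\Omega}|x|^2\,d\mu-R^2|\partial\Omega|=2\pi ab^2\left(\frac{2b}{3}-a\right)$. This is negative whenever $2b<3a$.

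The boundary barycenter minimizes $p\mapsto\int_{\partial\Omega}|x-p|^2\,d\mu$, so no choice of origin helps. Smoothing the edges keeps the strict inequality by continuity. The inequality you actually need does hold for the cube: $nR|\Omega|=24\sqrt{6/\pi}\approx33.2\le40$. The isoperimetric step simply discards too much.

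Consequently $Q(t)$ cannot be nonincreasing along IMCF: starting from a smoothed cube, $Q(0)<|\partial B|^{-\frac{2}{n-1}}=\lim_{t\to\infty}Q(t)$. The derivative estimate you call the crux also fails explicitly on flat oblate spheroids $\{(x_1^2+x_2^2)/a^2+x_3^2/c^2\le1\}$. There $\frac{\langle x,\nu\rangle}{H}=\frac{a^2D}{a^2+D}$, with $D=a^2(1-s)+c^2s$ and $s=(x_1^2+x_2^2)/a^2$. As $c\to0$, $\int_\Sigma\frac{\langle x,\nu\rangle}{H}\,d\mu\to2\pi(1-\ln2)a^4\approx1.93\,a^4$, while $\frac12\int_\Sigma|x|^2\,d\mu\to\frac{\pi}{2}a^4\approx1.57\,a^4$. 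Your fallbacks, such as $\int|x|^2\ge\int\langle x,\nu\rangle^2$, only make the target stronger.

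The missing idea is to keep the volume in play and to use the isoperimetric inequality only at the very end, on balls. The paper only cites Theorem \ref{Thm-BFNT-3.1}, but its Section \ref{sec-4} scheme works when specialized to $\mathbb{R}^n$ ($\lambda=r$, $g=r/n$, $\frac{\lambda'g}{\lambda}\equiv\frac1n$); this is the Kwong--Wei route.

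\textbf{Monotone quantity.} The quantity becomes $A(t)=\int_{\Sigma_t}|x|\,d\mu_t/(|\Sigma_t|\rho_t)$, where $\rho_t$ is the volume radius of $\Omega_t$. Its monotonicity uses only two facts: $\langle x/|x|,\nu\rangle\le1$, and $\int_\Sigma|x|\,d\mu\ge\int_\Sigma\langle x,\nu\rangle\,d\mu=n|\Omega|$. Together these give $\frac{d}{dt}\ln A\le\int_{\Sigma_t}\frac{d\mu_t}{H}\bigl(\frac{1}{\int_{\Sigma_t}|x|\,d\mu_t}-\frac{1}{n|\Omega_t|}\bigr)\le0$. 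Since also $A(t)\to1$, you get $\int_{\partial\Omega}|x|\,d\mu\ge|\partial\Omega|\rho$.

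\textbf{Eigenvalue bound.} Cauchy--Schwarz and your Step 1 then give $|\partial\Omega|\,\sigma_1(\Omega)\le\frac{n|\Omega|}{\rho^2}=|\partial B|\rho^{n-2}=|\partial B_\rho|\,\sigma_1(B_\rho)$.

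\textbf{Conclusion.} The isoperimetric inequality gives $\rho\le R$. The map $r\mapsto|\partial B_r|\sigma_1(B_r)=|\partial B|r^{n-2}$ is monotone, so \eqref{eq-thmbfnt} follows, exactly as in the proof of Theorem \ref{main thm}. For $n\ge3$, equality forces $\rho=R$, hence a ball. Star-shapedness with respect to the boundary barycenter holds because that point is interior to the convex body.
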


Combining IMCF with elementary methods, Kwong and Wei \cite{KW23} proved sharp geometric inequalities involving three quantities, and extended Theorem \ref{Thm-BFNT-3.1} to a broader class of star-shaped mean convex domains. 

In curved spaces, the situation exhibits diverse phenomena. In positively curved spaces, Euclidean spectral rigidity often fails. For instance, in the unit sphere $\mathbb{S}^n$, Castillon and Ruffini \cite{CR19} constructed a ring-shaped domain whose first Steklov eigenvalue is strictly larger than that of a geodesic ball of the same volume. Conversely, in negatively curved spaces, rigidity tends to be stronger. For the volume-constrained problem, Binoy and Santhanam \cite{BS14} proved that the geodesic ball is the unique maximizer for $\sigma_1(\Omega)$ in the hyperbolic space $\mathbb{H}^n$. Therefore, it is natural to ask whether a perimeter-constrained Weinstock type inequality holds in hyperbolic space, specifically:

\begin{conj}[\cite{CGGS24}, Open Question 4.27]\label{conj}
Let $\Omega$ be a bounded convex domain in $\mathbb{H}^n$. Let $\Omega^*$ be a geodesic ball with $|\partial\Omega^*|=|\partial\Omega|$. Is it true that
\begin{equation}
    \sigma_1(\Omega)\leq\sigma_1(\Omega^*)
\end{equation}
and equality holds if and only if $\Omega$ is a geodesic ball?
\end{conj}

For $n=2$, the conjecture holds, as the Weinstock inequality is valid for the hyperbolic plane and, more generally, for compact surfaces, which was remarkably proved by Fraser and Schoen \cite{FS11} using conformal mappings. For higher dimensions, the volume-constrained version was solved (see \cite{BS14}). When $\Omega$ is convex, this can be viewed as a corollary of Conjecture \ref{conj} by means of the isoperimetric inequality. Recently, the perimeter-constrained Conjecture \ref{conj} for $n \ge 4$ was established by the authors in \cite{GLW25}. 

In this paper, we completely settle this problem. By developing a weighted mass transplantation method to bridge the gap for $n=3$, we prove the Weinstock inequality for smooth domains with star-shaped mean convex boundary in $\mathbb{H}^n$ for general dimension $n\geq 3$, thereby providing a complete answer to the conjecture.

\begin{thm}\label{main thm}
Let $n\geq 3$, $\Omega$ be a smooth bounded domain in $\mathbb{H}^n$ with star-shaped mean convex boundary $\partial\Omega$, and $\Omega^{\ast}$ be a geodesic ball with $|\partial\Omega^*|=|\partial\Omega|$. Then
\begin{equation}\label{main-equ}
    \sigma_1(\Omega)\leq \sigma_1(\Omega^{\ast})
\end{equation}
and equality holds if and only if $\Omega$ is a geodesic ball.
\end{thm}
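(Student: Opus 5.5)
The plan is the standard Weinstock–Brock strategy: use coordinate-type test functions centered at a suitable point, then compare the resulting boundary integral with the one on $\Omega^*$ via a geometric inequality proved by inverse mean curvature flow (IMCF).

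\textbf{Test functions.} Work in geodesic polar coordinates $(r,\theta)$ about a point $p$, and take
\[
u_i = g(r)\,\frac{x_i}{r}, \qquad i=1,\dots,n .
\]
Here $g$ is the radial profile of the first nonconstant Steklov eigenfunction on the geodesic ball $\Omega^*=B(R)$: $g$ solves the radial ODE $g''+(n-1)\coth r\, g'-(n-1)g/\sinh^2 r=0$ with $g(0)=0$. The profile $g$ is then extended beyond $R$ suitably, either kept as is or frozen at $g(R)$; this choice will be fixed later. A Brouwer/Weinstock center-of-mass argument, as in \cite{BS14}, chooses $p$ so that $\int_{\partial\Omega}u_i\,d\mu=0$ for every $i$. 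Summing the Rayleigh quotients \eqref{def-steklov} over $i$ then gives
\[
\sigma_1(\Omega)\int_{\partial\Omega} g(r)^2\,d\mu \le \int_\Omega \Big(g'(r)^2+\frac{(n-1)g(r)^2}{\sinh^2 r}\Big)dv .
\]
On $\Omega^*$ both sides are equal, and the common value is $\sigma_1(\Omega^*)|\partial\Omega^*|g(R)^2$.

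\textbf{Comparison via IMCF.} It remains to prove two inequalities:
\[
\int_{\partial\Omega} g(r)^2\,d\mu\ \ge\ |\partial\Omega|\,g(R)^2 ,
\qquad
\int_\Omega \Big(g'^2+\frac{(n-1)g^2}{\sinh^2 r}\Big)dv\ \le\ \int_{\Omega^*}\Big(g'^2+\frac{(n-1)g^2}{\sinh^2 r}\Big)dv .
\]
Since $|\partial\Omega|=|\partial\Omega^*|$, combining these yields $\sigma_1(\Omega)\le\sigma_1(\Omega^*)$.

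\emph{Boundary inequality.} This should follow from a weighted Minkowski-type inequality. Flow $\partial\Omega$ by IMCF in $\mathbb{H}^n$. Star-shapedness and mean convexity are preserved, and the flow converges, after rescaling, to round spheres (Gerhardt). One then shows monotonicity of a quantity of the form
\[
|\Sigma_t|^{-\alpha}\int_{\Sigma_t}\Phi(r)\,d\mu ,
\]
with $\Phi$ related to $g^2$, and compares with the limit.

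\emph{Bulk inequality.} Write the integrand as a radial function $F(r)$, and use the coarea formula along the flow, or along the radial foliation, to compare $\int_\Omega F$ with $\int_{\Omega^*}F$. In the spirit of \cite{BFNT21,KW23}, it suffices to use monotonicity of $F$ together with a comparison of the volume distribution of $\Omega$ in radial shells against that of the ball with the same perimeter.

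\textbf{Main obstacle: the bulk term, especially for $n=3$.} I expect the bulk inequality to be the hard step. For $n\ge4$ the needed monotonicity of $F$ presumably holds, which is how \cite{GLW25} proceeds. For $n=3$, the term $g^2/\sinh^2 r$ may decrease fast enough that a naive rearrangement fails. The fix I would try is a weighted mass transplantation. Decompose the energy density into a part that is increasing in $r$, handled by rearrangement, and a remainder. Then move, or transplant, the mass of the remainder onto $\partial\Omega$ using the divergence theorem with a vector field $X=\phi(r)\partial_r$, so that
\[
\int_\Omega \divg X\,dv=\int_{\partial\Omega}\phi(r)\la\partial_r,\nu\ra\,d\mu .
\]
This boundary term is then controlled by the weighted boundary inequality from the IMCF step, using $\la\partial_r,\nu\ra\le1$. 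The weight $\phi$ must be chosen so that the leftover density is monotone and the boundary quantity is IMCF-monotone; finding such a $\phi$ is where I expect the real work to be.

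\textbf{Equality case.} Equality forces equality in the IMCF monotonicity, which forces $\partial\Omega$ to be umbilic and hence a geodesic sphere.
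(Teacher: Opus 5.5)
Your test-function step matches the paper, but splitting the problem into two separate comparisons with $\Omega^\ast$ leaves the decisive step unproved. Everything hinges on the boundary inequality $\int_{\partial\Omega}g^2\,d\mu\ge|\partial\Omega|\,g(R^\ast)^2$ with $|S(R^\ast)|=|\partial\Omega|$, and the IMCF argument you sketch cannot deliver it. Along the flow, $\frac{d}{dt}\int_{\Sigma_t}\Phi\,d\mu_t=\int_{\Sigma_t}(\Phi+\Phi'\langle\partial_r,\nu\rangle/H)\,d\mu_t$. So monotonicity of $|\Sigma_t|^{-\alpha}\int_{\Sigma_t}\Phi\,d\mu_t$ requires an \emph{upper} bound on $\int_{\Sigma_t}\Phi'\langle\partial_r,\nu\rangle/H\,d\mu_t$. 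No such bound is available for star-shaped mean convex hypersurfaces, since $H$ can be arbitrarily small somewhere. Moreover, such a quantity is constant on centered geodesic spheres only if $\Phi$ is a multiple of a power of $\sinh r$, so it cannot be sharp for $\int g^2$.

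The missing idea is the paper's device for eliminating the $1/H$ terms: couple the boundary integral with volume quantities. With $h$ defined by $h(\mathcal{V}(t))=1/|S(t)|$, the quantity $A(t)=|\Sigma_t|^{-1}\int_{\Sigma_t}g\,d\mu_t\,/\,(|\Omega_t|\,h(\int_{\Omega_t}\frac{\lambda'g}{\lambda}dv))$ satisfies $\frac{A'}{A}\le\int_{\Sigma_t}\frac{g'}{H}d\mu_t\,(\frac{1}{\int_{\Sigma_t}g\,d\mu_t}-\frac{1}{|\Omega_t|})\le0$. This works because $\divg(g\partial_r)=1$, i.e.\ $g'=1-(n-1)\lambda'g/\lambda$, which makes all $1/H$ contributions collapse into one sign-definite term. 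Even so, the resulting estimate $\int_\Sigma g^2\,d\mu\ge|\Sigma||\Omega|^2h(\int_\Omega\frac{\lambda'g}{\lambda}dv)^2$ is at most $|\Sigma|g(R_0)^2$, where $|B(R_0)|=|\Omega|$. That is strictly weaker than your inequality whenever $\Omega$ is not a ball, because then $R_0<R^\ast$ and $g$ is strictly increasing. So your boundary inequality is not proved in the proposal, and the paper's IMCF estimate does not give it either.

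Conversely, the bulk inequality you single out as the main obstacle needs no new idea. The function $F=(g')^2+(n-1)g^2/\lambda^2$ satisfies $F'=\frac{2(n-1)}{\lambda^3}(2gg'\lambda-\lambda'((g')^2\lambda^2+g^2))\le0$ in every dimension, by AM--GM and $\lambda'\ge1$. Ordinary mass transplantation together with $R_0\le R^\ast$ therefore gives it. In the paper, mass transplantation for $\Phi$ ($n\ge5$) and the weighted version with $\Psi$ ($n=3,4$) serve a different purpose. They bound the excess $\int_\Omega\frac{\lambda'g}{\lambda}dv-\int_{B(R_0)}\frac{\lambda'g}{\lambda}dv$, which is exactly what weakens the $h$-bound, by the energy deficit $g'(R_0)|B(R_0)|-\int_\Omega F\,dv$. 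Numerator and denominator of the Rayleigh bound are thus estimated jointly, $\int_\Omega F\,dv$ cancels, and one obtains $|\partial\Omega|\sigma_1(\Omega)\le|S(R_0)|\sigma_1(B(R_0))$. The perimeter statement then follows from the isoperimetric inequality and the monotonicity of $|S(r)|\sigma_1(B(r))=\omega_{n-1}\lambda^{n-1}g'/g$. Splitting into two independent comparisons with $\Omega^\ast$ discards precisely this coupling.

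Two smaller points. First, the rescaled IMCF limit in $\mathbb{H}^n$ need not be round, and the center of mass $p$ need not be a star center of $\Omega$. Any monotone quantity should therefore rely only on $\langle\partial_r,\nu\rangle\le1$ and $\min_{\Sigma_t}r\to\infty$, as the paper's does. Second, with the frozen extension $\tilde g\le g(R^\ast)$, your boundary inequality would hold only if $r\ge R^\ast$ on all of $\partial\Omega$, so $g$ must be kept unmodified.
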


\begin{rmk}
The result in Theorem \ref{main thm} also holds for star-shaped and weakly mean convex domains in hyperbolic space via an approximation argument.
\end{rmk}

In particular, when the domain is convex, we provide an affirmative answer to Conjecture \ref{conj}.

\begin{cor}
     Conjecture \ref{conj} is true.
\end{cor}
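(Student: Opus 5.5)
The corollary follows by reducing the conjecture to Theorem \ref{main thm}, together with the two–dimensional result of Fraser and Schoen and an approximation argument for non-smooth convex domains. For $n=2$ there is nothing new to prove: as recalled above, Fraser and Schoen \cite{FS11} showed that Weinstock's inequality holds on the hyperbolic plane, and a convex domain is simply connected, so their result applies. From now on let $n\geq 3$ and let $\Omega\subset\mathbb{H}^n$ be a bounded convex domain.

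First suppose $\partial\Omega$ is smooth. I will check that $\partial\Omega$ is star-shaped and mean convex.
\begin{enumerate}[(i)]
\item \emph{Star-shaped.} Fix $o\in\Omega$. By geodesic convexity, every geodesic ray from $o$ meets $\partial\Omega$ exactly once. The same convexity gives $\langle \partial_r,\nu\rangle\geq 0$ on $\partial\Omega$, where $\partial_r$ is the radial field from $o$ and $\nu$ is the outward unit normal.
\item \emph{Strict star-shapedness.} Equality $\langle\partial_r,\nu\rangle=0$ at some point $x\in\partial\Omega$ would mean the geodesic from $o$ to $x$ is tangent to $\partial\Omega$ at $x$. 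Since $o$ is an interior point, this contradicts the fact that $\Omega$ lies on one side of the totally geodesic supporting hypersurface at $x$. Hence $\langle \partial_r,\nu\rangle>0$, i.e. $\partial\Omega$ is star-shaped in the sense used in Theorem \ref{main thm}.
\item \emph{Mean convex.} The second fundamental form of a convex hypersurface is positive semidefinite, so $H\geq 0$. Thus $\partial\Omega$ is weakly mean convex.
\end{enumerate}
By the Remark following Theorem \ref{main thm}, the inequality $\sigma_1(\Omega)\leq\sigma_1(\Omega^*)$ holds, with equality only for geodesic balls. If one prefers not to invoke the Remark, one can instead approximate $\Omega$ by strictly convex smooth domains, for which $H>0$, and argue as in the next paragraph.

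For a general (Lipschitz) bounded convex $\Omega$, I approximate from inside by smooth strictly convex domains $\Omega_k\to\Omega$ in the Hausdorff sense. Such $\Omega_k$ can be obtained, for instance, by smoothing the hyperbolic distance function to the complement, or by working in the Klein model, where geodesic convexity coincides with Euclidean convexity. For convex sets, Hausdorff convergence gives:
\begin{itemize}
\item $|\partial\Omega_k|\to|\partial\Omega|$, by monotonicity and continuity of perimeter under inclusion of convex sets;
\item uniformly bounded Lipschitz character, which yields uniform trace and extension inequalities;
\item consequently $\sigma_1(\Omega_k)\to\sigma_1(\Omega)$, via the variational characterization \eqref{def-steklov}: transplant test functions by the radial bi-Lipschitz maps between $\Omega_k$ and $\Omega$.
\end{itemize}
Since $\sigma_1$ of a geodesic ball depends continuously on its radius, applying the smooth case to each $\Omega_k$ and passing to the limit gives $\sigma_1(\Omega)\leq\sigma_1(\Omega^*)$.

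The main obstacle is the equality case for non-smooth convex $\Omega$, because equality is not stable under limits. My plan is to run the proof of Theorem \ref{main thm} directly on the Lipschitz convex domain, and to extract rigidity from the equality case of the geometric inequality used there. The IMCF-type or weighted isoperimetric inequality relating $|\partial\Omega|$ to the integrals of the test functions is available for convex bodies through the Alexandrov–Fenchel-type inequalities in $\mathbb{H}^n$. Its equality case forces the convex body to be a geodesic ball. If this extension proves delicate, the fallback is to state rigidity for smooth convex domains, which is the setting of Open Question 4.27.
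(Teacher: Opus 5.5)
Your reduction matches the paper's: a smooth convex domain is star-shaped with $H\geq 0$, so Theorem \ref{main thm} applies through the Remark on weakly mean convex boundaries, and $n=2$ is the Fraser--Schoen result. The paper only treats smooth domains, so your approximation of non-smooth convex bodies is an extra step.

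The equality case you single out as the main obstacle, which the Remark's approximation argument also needs, requires no Alexandrov--Fenchel input. Pass the volume-normalized inequality \eqref{eq-prelem} to the limit instead of the final one. Then, exactly as in the paper's proof of Theorem \ref{main thm}, $\sigma_1(\Omega)=\sigma_1(\Omega^{\ast})$ forces $R=R^{\ast}$, because $|S(r)|\sigma_1(B(r))=\omega_{n-1}\lambda^{n-3}\tanh r\,\frac{\lambda\lambda'g'}{g}$ is strictly increasing. That gives equality in Theorem \ref{thm-isoperimetric inequality}, which does not require smoothness, so $\Omega$ is a geodesic ball.
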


\begin{rmk}\label{rmk-1}
Let $I_1(t)=|\partial B(t)|$ and $J(t)=\sigma_1(B(t))$ for $t>0$. Then (\ref{main-equ}) is equivalent to the following isoperimetric-type inequality
\begin{equation}\label{main-equ2}
    \frac{\sigma_1(\Omega)}{J\circ I_1^{-1}(|\partial\Omega|)}\leq \left.\frac{\sigma_1(\Omega)}{J\circ I_1^{-1}(|\partial\Omega|)}\right|_{\text{$\Omega$ is a geodesic ball}}=1.
\end{equation}
\end{rmk}

Our method is inspired by the use of IMCF as presented in \cite{KW23} and by the well-known mass transplantation argument, as discussed in \cite{FL21}. The application of IMCF is not straightforward, as we must introduce a special auxiliary function $h$ in \eqref{def-h}. The evolution of this function along IMCF establishes a useful geometric inequality. By the same estimate, as a byproduct, we can extend Verma's isoperimetric inequality \cite{Ver21} for the harmonic mean of the first $(n-1)$ non-zero Steklov eigenvalues with prescribed area. We state it as the following Corollary \ref{main cor}, which can lead to Theorem \ref{main thm} again.

\begin{cor}\label{main cor}
Let $n\geq 3$, $\Omega$ be a smooth bounded domain in $\mathbb{H}^n$ with star-shaped mean convex boundary $\partial\Omega$, and $\Omega^{\ast}$ be a geodesic ball with $|\partial\Omega^*|=|\partial\Omega|$. Then
\begin{equation}
    \sum_{i=1}^{n-1}\frac{1}{\sigma_i(\Omega)}\geq \sum_{i=1}^{n-1}\frac{1}{\sigma_i(\Omega^{\ast})}
\end{equation}
and equality holds if and only if $\Omega$ is a geodesic ball.
\end{cor}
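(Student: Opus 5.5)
The plan is to run a Brock--Verma type argument with the first $n$ test functions coming from the geodesic ball. We then use the dual variational principle for sums of reciprocals of Steklov eigenvalues. The geometric input is the same inequality (obtained via IMCF and the auxiliary function $h$ in \eqref{def-h}) that is used for Theorem \ref{main thm}.

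\textbf{Step 1 (test functions).} Work in geodesic polar coordinates $(r,\theta)$ about a point $p$, and write $\theta=(\theta_1,\dots,\theta_n)\in\mathbb{S}^{n-1}$. Let $R$ be the radius of $\Omega^*$. On $B(R)$ the first non-zero Steklov eigenvalue has multiplicity $n$, with eigenfunctions $g(r)\theta_i$. Here $g$ solves the radial ODE
\[
g''+(n-1)\coth r\, g'-\frac{(n-1)g}{\sinh^2 r}=0,\qquad g(0)=0,
\]
and $\sigma_1(\Omega^*)=g'(R)/g(R)$. Extend $g$ by the constant $g(R)$ for $r\ge R$. By the usual Brouwer fixed-point (center of mass) argument, choose $p$ so that $\int_{\partial\Omega}g(r)\theta_i\,d\mu=0$ for $i=1,\dots,n$. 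Then $f_i=g(r)\theta_i$ are admissible in the Steklov quotient.

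\textbf{Step 2 (linear algebra reducing to traces).} Set $A_{ij}=\int_\Omega\nabla f_i\cdot\nabla f_j\,dv$ and $B_{ij}=\int_{\partial\Omega}f_if_j\,d\mu$. The dual characterization reads
\[
\sum_{i=1}^{k}\frac1{\sigma_i}=\sup\Big\{\sum_{i=1}^k\int_{\partial\Omega}v_i^2:\ \int_\Omega\nabla v_i\cdot\nabla v_j=\delta_{ij},\ \int_{\partial\Omega}v_i=0\Big\}.
\]
Applying it inside the span of the $f_i$ gives $\sum_{i=1}^{n-1}1/\sigma_i(\Omega)\ge\mu_1+\dots+\mu_{n-1}$, where $\mu_1\ge\dots\ge\mu_n$ are the eigenvalues of $A^{-1/2}BA^{-1/2}$. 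We use the freedom to rotate $\theta$ by $O(n)$, together with the explicit structure
\[
B_{ij}=\int_{\partial\Omega}g^2\theta_i\theta_j,\qquad A_{ij}=\int_\Omega\Big(g'^2-\frac{g^2}{\sinh^2r}\Big)\theta_i\theta_j+\delta_{ij}\int_\Omega\frac{g^2}{\sinh^2 r}.
\]
With these we aim to bound $\mu_1+\dots+\mu_{n-1}$ from below by a quantity involving only $\operatorname{tr}B=\int_{\partial\Omega}g^2$ and $\operatorname{tr}A=\int_\Omega\big(g'^2+(n-1)g^2/\sinh^2 r\big)$. On the ball this bound must be sharp, since there $A$ and $B$ are multiples of the identity. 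The first thing to try is to rotate so that $B$ is diagonal. One then uses $\sum_{j\le n-1}\mu_j\ge\frac{n-1}{n}\operatorname{tr}(A^{-1}B)$ together with a Cauchy--Schwarz bound $\operatorname{tr}(A^{-1}B)\ge(\operatorname{tr}B)^2/\operatorname{tr}(AB)$. The anisotropic parts are controlled because the coefficient $g'^2-g^2/\sinh^2r$ has a sign.

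\textbf{Step 3 (geometric comparison).} It remains to show two comparisons. The first is $\int_{\partial\Omega}g^2\,d\mu\ge g(R)^2|\partial\Omega^*|$. This follows since $g\le g(R)$ with equality off $B(R)$, together with the estimate along IMCF for the auxiliary function $h$; that estimate yields exactly this kind of boundary-integral comparison for star-shaped mean convex $\partial\Omega$ with $|\partial\Omega|=|\partial\Omega^*|$. The second is an upper bound of the energy-type terms $\int_\Omega(\cdot)$ by their values on $\Omega^*$. This is done by mass transplantation: the isoperimetric inequality gives $|\Omega|\le|\Omega^*|$, and the radial energy density is monotone beyond $R$. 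In $n=3$ the density is not monotone, and there we use the weighted transplantation, with weights tied to $h$. Plugging into Step 2 yields $\sum_{i=1}^{n-1}1/\sigma_i(\Omega)\ge(n-1)/\sigma_1(\Omega^*)=\sum_{i=1}^{n-1}1/\sigma_i(\Omega^*)$.

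\textbf{Equality and main obstacle.} In the equality case the IMCF inequality is rigid, which forces $\partial\Omega$ to be a geodesic sphere. I expect the main obstacle to be Step 2 combined with Step 3: the matrix inequality must pass from all $n$ test functions to only $n-1$ reciprocals without loss, and the constant must remain sharp. I would first try to show that the off-trace contributions have the favorable sign, using $g'\ge g/\sinh r$ (checked from the ODE); if that fails, the fallback is Verma's projection argument.
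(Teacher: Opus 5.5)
Your proposal has a genuine gap. The decisive one is in Step 3, and Step 2 does not close as written either.

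\textbf{Step 3.} With your truncated $g$ (constant $g(R)$ for $r\ge R$, where $R$ is the radius of $\Omega^*$) you have $0\le g\le g(R)$. Hence trivially $\int_{\partial\Omega}g^2\,d\mu\le g(R)^2|\partial\Omega|=g(R)^2|\partial\Omega^*|$. The comparison you need is the reverse inequality, which holds only if $g\equiv g(R)$ a.e.\ on $\partial\Omega$, so it is false in general.

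The IMCF estimate cannot supply it. It is built for the untruncated $g=|B(r)|/|S(r)|$, since it uses $\divg(g\partial_r)=1$. Even there, Corollary \ref{cor-int-g} only gives $\int_\Sigma g^2\ge|\Sigma||\Omega|^2h\big(\int_\Omega\frac{\lambda'g}{\lambda}\big)^2$. Mass transplantation then gives $h\big(\int_\Omega\frac{\lambda'g}{\lambda}\big)\le 1/|S(R_0)|$, with $R_0$ the equal-volume radius, which is the wrong direction for a standalone lower bound. Bounding the boundary term and the energy term separately can at best reproduce the volume-normalized (Verma) inequality.

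The missing idea is the coupling in Lemma \ref{lem-h2-bound}. There the excess $\int_\Omega\frac{\lambda'g}{\lambda}-\int_{B(R_0)}\frac{\lambda'g}{\lambda}$ is paid for by the energy deficit $g'(R_0)|B(R_0)|-\int_\Omega\big((g')^2+(n-1)\frac{g^2}{\lambda^2}\big)$. This uses the monotonicity of $\Phi$ (Lemma \ref{lem-n=5}) for $n\ge5$. For $n=3,4$ it uses Theorem \ref{thm-weighted-mt} with $f=g'$ and the decreasing $\Psi$ of Lemma \ref{lem-n=3-4}. The energy density itself is decreasing in every dimension; what fails for $n=3,4$ is the monotonicity of the coupled quantity $\Phi$. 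The coupling yields the ratio inequality
\[
\int_{\partial\Omega}g^2\,d\mu\ \ge\ \frac{|\partial\Omega|}{|S(R_0)|\,\sigma_1(B(R_0))}\int_\Omega\Big((g')^2+(n-1)\frac{g^2}{\lambda^2}\Big)dv\ \ge\ \frac{1}{\sigma_1(\Omega^*)}\int_\Omega\Big((g')^2+(n-1)\frac{g^2}{\lambda^2}\Big)dv,
\]
where the last step uses the isoperimetric inequality and Lemma \ref{lem-ln-1g'/g}. This coupled inequality, for the untruncated $g$, is what your argument lacks.

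\textbf{Step 2.} The generalized eigenvalues $\mu_j$ of $(B,A)$ are unchanged under $\theta\mapsto Q\theta$ with $Q\in O(n)$, so the rotation freedom buys nothing. After your Cauchy--Schwarz step you would need $\operatorname{tr}(AB)\le\frac1n\operatorname{tr}A\,\operatorname{tr}B$. That is a correlation inequality $\operatorname{tr}(DB)\le\frac1n\operatorname{tr}D\,\operatorname{tr}B$ for $D=\int_\Omega\big((g')^2-\frac{g^2}{\lambda^2}\big)\theta\theta^{T}dv$, and it has no reason to hold.

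Your sign is also backwards. In fact $g'\le g/\sinh r$, equivalently $g\ge\lambda/((n-1)\lambda'+1)$; differentiating $\lambda^{n-1}$ times the difference gives $(n-2)(\lambda'-1)\ge0$. So $D\le0$, and this is the sign the argument needs.

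The paper follows Verma instead. A Borsuk--Ulam choice of frame makes $g\theta_i$ orthogonal on $\partial\Omega$ to the true eigenfunctions $u_0,\dots,u_{i-1}$, so that $\sigma_iB_{ii}\le A_{ii}$. Then $D\le0$ together with $\sigma_i^{-1}\ge\sigma_n^{-1}$ gives $\operatorname{tr}B\le\frac1{n-1}\sum_{i<n}\sigma_i^{-1}\operatorname{tr}A$.

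Your dual framework can also be repaired without Borsuk--Ulam. Using $A$-orthonormal generalized eigenvectors, write $A=\sum_jw_jw_j^{T}$ and $B=\sum_j\mu_jw_jw_j^{T}$. Then $|w_j|^2\le\|A\|\le a:=\int_\Omega g^2/\lambda^2$ and $\sum_j|w_j|^2=\operatorname{tr}A=(n-1)a+\int_\Omega(g')^2$. A bathtub argument then gives $\operatorname{tr}B\le\frac1{n-1}\sum_{j<n}\mu_j\,\operatorname{tr}A$.

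Either way, Step 2 only yields $\sum_{i<n}\sigma_i^{-1}\ge(n-1)\operatorname{tr}B/\operatorname{tr}A$. The corollary follows only when this is combined with the coupled inequality above.
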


$\ $

The paper is organized as follows. In Section \ref{sec-2}, we present basic properties of hyperbolic spaces, including the isoperimetric inequality and the first non-zero Steklov eigenvalue of geodesic balls. We also state the well-known theorems for center of mass and mass transplantation. In Section \ref{sec-3}, we discuss the detailed analytic properties of the volume-area ratio function $g$. In Section \ref{sec-4}, we introduce the key auxiliary function $h$, obtain the estimate using inverse mean curvature flow, and prove Theorem \ref{main thm} and Corollary \ref{main cor}. Specifically, we apply the mass transplantation argument for higher dimensions ($n\geq 5$), and develop a weighted mass transplantation method for lower dimensions ($n=3,4$) to overcome the analytical difficulties caused by the lack of global monotonicity.

\begin{ack}
	This work was supported by National Natural Science Foundation of China [grant number 12471047]. 
    The third author was supported by Hong Kong RGC grant (Early Career Scheme) of Hong Kong No. 24304222 and No. 14300623, and NSFC grant No. 12222122.
    We would like to thank Yong Wei for his valuable comments and suggestions.
\end{ack}

\section{Preliminaries}\label{sec-2}
We use the warped product to illustrate the hyperbolic space $(\mathbb{H}^n,g_{\mathbb{H}^n})=[0,\infty)\times_{\lambda(r)} \mathbb{S}^{n-1}$, where $\lambda(r)=\sinh r$. That is, after choosing an origin $O$ in $\mathbb{H}^n$, the Riemannian metric $g_{\mathbb{H}^n}$ on $\mathbb{H}^n$ is defined by
\[
g_{\mathbb{H}^n}=dr^2+\lambda^2(r)g_{\mathbb{S}^{n-1}},
\]
where $r$ denotes the geodesic distance from $O$, and $g_{\mathbb{S}^{n-1}}$ is the canonical metric on the unit sphere $\mathbb{S}^{n-1}$. We will also use the notation $\la \cdot,\cdot\ra$ to denote $g_{\mathbb{H}^n}$. 

Let $\Lambda(r) = \int_0^r \lambda(\rho) d\rho = \cosh r - 1$. The hyperbolic space $\mathbb{H}^n$ admits a conformal Killing vector field $V = \bar{\nabla}\Lambda = \lambda(r)\partial_r$. This vector field satisfies the following differential equation (see \cite{Bre13}):
\begin{equation}
\bar{\nabla} V = \lambda'(r)g_{\mathbb{H}^n},
\end{equation}
where $\bar{\nabla}$ is the Levi-Civita connection of $\mathbb{H}^n$.

\subsection{The first non-zero Steklov eigenvalue of geodesic balls}$\ $

For $r> 0$, a geodesic ball $B(r)\subset \mathbb{H}^n$ centered at $O$ with radius $r$ is given by $\{(s,\theta)\in \mathbb{H}^n:\ 0\leq s\leq r\}\subset \mathbb{H}^n$, and its boundary is the geodesic sphere $S(r)=\partial B(r)=\{(s,\theta)\in \mathbb{H}^n:\ s=r\}$. We can directly compute the volume and area of $B(r)$ as
\begin{align}\label{eq-B}
|B(r)|=\int_0^r\int_{\mathbb{S}^{n-1}}\lambda^{n-1}(t)dtd\mu_{\mathbb{S}^{n-1}}=\omega_{n-1}\int_0^r\lambda^{n-1}(t)dt
\end{align}
and
\begin{align}\label{eq-S}
|S(r)|=\int_{\mathbb{S}^{n-1}}\lambda^{n-1}(r)d\mu_{\mathbb{S}^{n-1}}=\omega_{n-1}\lambda^{n-1}(r),
\end{align}
where $\omega_{n-1}$ denotes the area of $\mathbb{S}^{n-1}$. 

It is remarkable that the geodesic ball minimizes the area among bounded domains with the same volume,  which is known as the isoperimetric inequality in hyperbolic space.
\begin{thm}[\cite{Sch43}]\label{thm-isoperimetric inequality}
Let $\Omega\subset \mathbb{H}^n$ be a bounded domain and $B(R)$ be a geodesic ball with the same volume as $\Omega$. Then the area of $\Omega$ satisfies
\begin{align}\label{eq-isoperimetric}
|\partial\Omega|\geq |\partial B(R)|.
\end{align}
Furthermore, equality holds if and only if $\Omega$ is a geodesic ball.
\end{thm}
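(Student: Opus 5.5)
The statement immediately above is the hyperbolic isoperimetric inequality (Theorem \ref{thm-isoperimetric inequality}), cited from \cite{Sch43}, and I would prove it by symmetrization. The tool is two-point symmetrization (polarization) or Steiner symmetrization with respect to totally geodesic hyperplanes through the origin $O$. These preserve volume and do not increase perimeter. I would first prove the inequality for smooth or polyhedral domains and then pass to arbitrary bounded domains of finite perimeter by lower semicontinuity of perimeter under $L^1$ convergence.

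The first step is to set up Steiner symmetrization in $\mathbb{H}^n$. Fix a totally geodesic hyperplane $P$ and use Fermi coordinates $(x,t)$, where $x\in P$ and $t$ is the signed distance to $P$. In these coordinates the metric is $dt^2+\cosh^2 t\, g_P$. A direct Euclidean-style Steiner symmetrization is awkward because the volume density $\cosh^{n-1}t$ depends on $t$. I would therefore use two-point symmetrization instead: the reflection $\rho_P$ across $P$ is an isometry, and the polarized set
\[
\Omega^P=\bigl((\Omega\cup\rho_P\Omega)\cap H^+\bigr)\cup\bigl((\Omega\cap\rho_P\Omega)\cap H^-\bigr),
\]
where $H^\pm$ are the half-spaces, has the same volume as $\Omega$. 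Its perimeter satisfies $|\partial\Omega^P|\le|\partial\Omega|$; this is a standard inclusion-exclusion argument for perimeters, $P(A\cup B)+P(A\cap B)\le P(A)+P(B)$, applied piecewise.

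The second step, which I expect to be the main obstacle, is showing that iterating polarizations over a suitable countable dense family of hyperplanes converges in $L^1$ to the geodesic ball $B(R)$ of equal volume centered at $O$. I would follow the Brock--Solynin approximation scheme. Among all sets reachable by polarizations together with their $L^1$ limits, take one minimizing $\int_\Omega \Lambda\,dv$, using the distance function $r$ from $O$. If this minimizer $\Omega_\infty$ were not a ball, some hyperplane separating a region of $\Omega_\infty$ far from $O$ from a region of its complement closer to $O$ would strictly decrease $\int\Lambda$, which is a contradiction. Compactness of the reachable family in $L^1$ follows from the uniform bounds on perimeter and on diameter; the diameter bound holds after the first polarizations move mass toward $O$. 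Lower semicontinuity then gives $|\partial B(R)|\le|\partial\Omega|$.

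For the equality case, I would take $\Omega$ smooth with $|\partial\Omega|=|\partial B(R)|$. Every polarization must then preserve perimeter, which forces that for each hyperplane $P$, either $\Omega\subset\rho_P\Omega$ on $H^+$ or the reverse. The argument above applies to hyperplanes through an arbitrary point. Choosing the center $c$ to be the point minimizing $\int_\Omega \Lambda(d(c,\cdot))$, the equality forces $\Omega$ to be symmetric under every reflection fixing $c$, so $\Omega$ is a geodesic ball. An alternative route for the equality case, if this becomes delicate, is the first-variation argument: an area-minimizer at fixed volume has constant mean curvature, and Alexandrov's theorem in $\mathbb{H}^n$ forces a geodesic sphere.
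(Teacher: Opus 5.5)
The paper gives no proof of this statement. It is quoted from Schmidt \cite{Sch43} and used as a black box, and only for smooth domains: in the proof of Theorem \ref{main thm}, to get $|S(R)|\le|\partial\Omega|$ and the equality case. So your argument is an independent route, and for the inequality it is sound. Reflections across totally geodesic hyperplanes are isometries of $\mathbb{H}^n$, and the equidistant set of two points is such a hyperplane. Polarization preserves volume, does not increase perimeter, and is non-expansive in $L^1$. The Brock--Solynin argument then works: minimize $\int\Lambda\,dv$ over the $L^1$-closure of the polarization orbit, which is compact and invariant under polarization, to get $B(R)$ in that closure. Lower semicontinuity gives $|\partial B(R)|\le|\partial\Omega|$.

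Two corrections to the inequality part:
\begin{itemize}
\item The hyperplanes must \emph{not} pass through $O$, and $O$ must lie in the half-space that receives mass. Your opening sentence says the opposite. A polarization across a hyperplane through $O$ preserves the distribution of $r$ on $\Omega$, hence $\int_\Omega\Lambda\,dv$, so it can never bring an off-centre set to $B(R)$. Your second step in fact uses bisecting hyperplanes that miss $O$.
\item With this admissibility, no polarization ever increases $\sup_{\Omega} r$. So the support bound holds from the start, not only ``after the first polarizations''.
\end{itemize}

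The equality case has a genuine gap. From $|\partial\Omega|=|\partial B(R)|$ you correctly get that every polarization preserves perimeter. But this does \emph{not} force $\Omega\cap H^+\subset\rho_P\Omega$ or the reverse, because polarization generically preserves perimeter exactly. When $\partial\Omega$ and $\rho_P\partial\Omega$ cross transversally, the boundary of $\Omega^P$ is just a rearrangement of pieces of $\partial\Omega$ and their reflections. For example, an ellipse tilted by $45^\circ$ and polarized across a line through its centre gives a non-nested $\Omega^P$, with corners, of the same perimeter. In your scheme the perimeter drop happens only in the $L^1$ limit, through lower semicontinuity. So equality says nothing about individual steps.

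Rescuing the polarization route would need a further ingredient. One option is that $\Omega^P$ is itself an isoperimetric minimizer and so has regular boundary, which rules out the corners created by transversal crossings; you would then also need a unique continuation argument for tangential contact.

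The cleaner fix is your fallback, which suffices for the smooth domains used in this paper. Equality makes $\Omega$ a volume-constrained area minimizer, so $H$ is constant on $\partial\Omega$. Alexandrov's theorem in $\mathbb{H}^n$ then makes each component of $\partial\Omega$ a geodesic sphere. You still have to exclude several components:
\begin{itemize}
\item the outer component gives $H=(n-1)\coth r_0>0$;
\item a component bounding a hole would be a sphere with mean curvature $-(n-1)\coth r_1<0$ with respect to the outward normal of $\Omega$.
\end{itemize}
This contradicts constancy of $H$, so $\partial\Omega$ is connected and $\Omega$ is a geodesic ball.
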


\begin{rmk}\label{rmk-2}
As noted in Remark \ref{rmk-1}, if we additionally denote $I_0(t)=|B(t)|$ for $t>0$, then \eqref{eq-isoperimetric} is equivalent to the following inequality
\begin{align}
\frac{|\Omega|}{I_0\circ I_1^{-1}(|\partial\Omega|)}\leq \left.\frac{|\Omega|}{I_0\circ I_1^{-1}(|\partial\Omega|)}\right|_{\text{$\Omega$ is a geodesic ball}}=1.
\end{align}
Combining this with (\ref{main-equ2}), we deduce
\begin{align}
    \frac{\sigma_1(\Omega)}{J\circ I_0^{-1}(|\Omega|)}\leq \left.\frac{\sigma_1(\Omega)}{J\circ I_0^{-1}(|\Omega|)}\right|_{\text{$\Omega$ is a geodesic ball}}=1.
\end{align}
\end{rmk}

An important function in hyperbolic space is the geodesic ball's volume-area ratio function $g$, defined as $g(r):=\frac{|B(r)|}{|S(r)|}$, i.e.,
\begin{align}\label{eq-def-g}
g(r)=\frac{1}{\lambda^{n-1}(r)}\int_0^r\lambda^{n-1}(t)dt.
\end{align}
Using the standard method of separation of variables, the first non-zero Steklov eigenvalue of a geodesic ball is computed as follows:
\begin{prop}[\cite{BS14}]
Let $B(r)\subset \mathbb{H}^n$ be a geodesic ball centered at $O$ with boundary $S(r)$. Then the first non-zero eigenvalue $\sigma_1(B(r))$ of the Steklov eigenvalue problem \eqref{def-steklov} on $B(r)$ is given by
\begin{align}\label{eq-sigmabr}
\sigma_1(B(r))=\frac{\int_{B(r)}((g')^2+(n-1)\frac{g^2}{\lambda^2}) dv}{g(r)^2|S(r)|}.
\end{align}
\end{prop}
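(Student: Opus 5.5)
The plan is to separate variables and then identify the first eigenfunction explicitly. In geodesic polar coordinates $(r,\theta)$ about $O$, the Laplacian of $\mathbb{H}^n$ reads
\[
\Delta u=\partial_r^2u+(n-1)\frac{\lambda'}{\lambda}\partial_r u+\frac{1}{\lambda^2}\Delta_{\mathbb{S}^{n-1}}u .
\]
I will look for harmonic functions of the form $u=f(r)\phi(\theta)$, where $\phi$ is a spherical harmonic of degree $k$. Then $\Delta_{\mathbb{S}^{n-1}}\phi=-k(k+n-2)\phi$, and $f$ must solve
\[
f''+(n-1)\frac{\lambda'}{\lambda}f'-\frac{k(k+n-2)}{\lambda^2}f=0,
\]
with $f$ regular at $r=0$. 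Since spherical harmonics span $L^2(S(r))$, the Steklov eigenvalues of $B(r)$ are exactly the numbers $f_k'(r)/f_k(r)$ for $k\ge0$. The value $k=0$ gives constants and eigenvalue $0$. The first step is to show that $k\mapsto f_k'(r)/f_k(r)$ is increasing, so that $\sigma_1(B(r))=f_1'(r)/f_1(r)$. I would prove this with a Riccati comparison: $q_k=f_k'/f_k$ satisfies $q_k'=-q_k^2-(n-1)\frac{\lambda'}{\lambda}q_k+\frac{k(k+n-2)}{\lambda^2}$, and $q_k\sim k/r$ near $0$. Alternatively, one can apply the Rayleigh quotient \eqref{def-steklov} to each mode and compare the coefficients of $\int f^2/\lambda^2$.

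The second step, which I expect to be the crux, is to show that $f_1=g$, with $g$ as in \eqref{eq-def-g}. By definition $g\lambda^{n-1}=\int_0^r\lambda^{n-1}dt$, so differentiating gives
\[
g'=1-(n-1)\frac{\lambda'}{\lambda}g .
\]
Differentiating once more, and using $\lambda''=\lambda$ together with $\lambda'^2-\lambda^2=1$, I will check directly that
\[
g''+(n-1)\frac{\lambda'}{\lambda}g'-\frac{n-1}{\lambda^2}g=0,
\]
which is the ODE with $k=1$. Near $0$ we have $g\sim r/n$, so $g$ is the regular solution. Hence the eigenfunctions are $u=g(r)\phi(\theta)$ with $\phi$ a linear coordinate function restricted to $\mathbb{S}^{n-1}$, and $\sigma_1(B(r))=g'(r)/g(r)$.

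The last step turns this into the stated integral formula by evaluating the Rayleigh quotient of $u=g(r)x_i$ and summing over $i=1,\dots,n$, using $\sum x_i^2=1$ and $\sum|\nabla_{\mathbb{S}^{n-1}}x_i|^2=n-1$. The gradient of $u$ gives
\[
\sum_i|\nabla u|^2=(g')^2+(n-1)\frac{g^2}{\lambda^2},
\]
and the boundary term gives $\sum_i\int_{S(r)}u^2=g(r)^2|S(r)|$. Since $u$ is a Steklov eigenfunction for $\sigma_1$, integrating by parts gives $\int|\nabla u|^2=\int_{\partial}u\,\partial_\nu u=\sigma_1\int_{\partial}u^2$, and summing over $i$ yields \eqref{eq-sigmabr}. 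Besides the ODE verification, the only other point needing care is the ordering of the modes in the first step.
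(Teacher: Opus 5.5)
Your proposal is correct and follows the route the paper indicates: separation of variables as in \cite{BS14}, with $g$ the regular $k=1$ radial solution, giving $\sigma_1(B(r))=g'(r)/g(r)$; your ODE verification, the ordering of modes, and the summed Rayleigh-quotient step are all sound. The paper converts between $g'/g$ and the integral form using the identity $\divg(gg'\partial_r)=(g')^2+(n-1)\frac{g^2}{\lambda^2}$, which is equivalent to your integration-by-parts step.
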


\subsection{Center of mass}$\ $

In order to study the upper bound of the first non-zero Steklov eigenvalue not only for geodesic balls but also for general domains, we need the following center of mass theorem. 

In \cite{AS96}, Aithal and Santhanam give a theorem for complete Riemannian manifold $M$, which also applies to measurable subset $A$ in hyperbolic space. Let $CA$ denote the convex hull of $A$. Let $\exp_q:T_qM\to M$ be the exponential map, and let $X=(x_1,\ldots,x_n)$ be a system of normal coordinates centered at $q$. We identify $CA$ with $\exp_q^{-1}(CA)$ for each $q\in CA$, and denote $(g_{\mathbb{H}^n})_q(X,X)$ as $\|X\|_q^2$ for $X\in T_qM$. The theorem of center of mass with respect to the mass distribution function $G$ is stated as
\begin{thm}[\cite{BS14}]
Let $A\subset \mathbb{H}^n$ be a measurable subset and $G$ be a continuous function on $[0,\infty)$ that is positive on $(0,\infty)$. Then there exists a point $p\in CA$ such that
\[
\int_{A}G(\|X\|_p)X dv=0,
\]
where $X=(x_1,\ldots,x_n)$ is a normal coordinate system centered at $p$.
\end{thm}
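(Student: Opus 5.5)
The plan is to realize the vector $\int_A G(\|X\|_q)X\,dv$ as minus the gradient of a potential on $\mathbb{H}^n$, find a global minimizer of that potential, and check that the minimizer lies in (the closure of) $CA$. Throughout I assume, as is implicit in the applications, that $A$ is bounded with positive measure; if $|A|=0$ every $p$ works trivially. Put
\[
\Phi(t)=\int_0^t sG(s)\,ds,\qquad F(q)=\int_A \Phi\big(d(q,x)\big)\,dv(x),\qquad q\in\mathbb{H}^n .
\]
Since $G>0$ on $(0,\infty)$, $\Phi$ is $C^1$ and strictly increasing, with $\Phi'(t)=tG(t)$.

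\textbf{Step 1: the gradient of $F$.} In a Hadamard manifold $\exp_q$ is a global diffeomorphism, and for $x\neq q$
\[
\nabla_q\, d(q,x)=-\frac{\exp_q^{-1}x}{d(q,x)} .
\]
Hence $\nabla_q \Phi(d(q,x))=-G(d(q,x))\exp_q^{-1}x$, which extends continuously across $x=q$ because $\|\exp_q^{-1}x\|=d(q,x)$ and $G$ is continuous at $0$. Boundedness of $A$ lets me differentiate under the integral sign. In normal coordinates $X$ centered at $q$, where $\exp_q^{-1}x$ is identified with $X$, this gives
\[
\nabla F(q)=-\int_A G(\|X\|_q)\,X\,dv .
\]
So it suffices to find a critical point of $F$ lying in $CA$.

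\textbf{Step 2: a global minimizer exists.} $F$ is continuous. Since $A$ is bounded and $\Phi$ is strictly increasing, $F(q)\ge |A|\,\Phi\big(d(q,O)-\operatorname{diam}A\big)$ once $d(q,O)>\operatorname{diam}A$. This lower bound tends to infinity, or at least exceeds $F$ at any fixed point of $CA$, for $q$ far away. So $F$ attains a global minimum at some $p$, and $\nabla F(p)=0$.

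\textbf{Step 3: the minimizer lies in $CA$.} This is the step I expect to be the main obstacle, mainly in handling the closure and strictness correctly.

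1. The closure $\overline{CA}$ is a closed convex subset of the Hadamard manifold $\mathbb{H}^n$, so it has a well-defined nearest-point projection $\pi$.
2. Suppose $p\notin\overline{CA}$. For every $x\in\overline{CA}$, the angle at $\pi(p)$ between the geodesics to $p$ and to $x$ is at least $\pi/2$.
3. The hyperbolic law of cosines (or comparison with CAT$(0)$) then gives the strict inequality $d(\pi(p),x)<d(p,x)$.
4. Integrating over $A$ and using that $\Phi$ is strictly increasing gives $F(\pi(p))<F(p)$, contradicting minimality.

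Hence $p\in\overline{CA}$. If $CA$ is understood as closed, as in \cite{AS96}, this is the claim. Otherwise I would note that the minimizer must lie in the interior relative to the convex hull whenever $A$ has positive measure: the same strict-decrease argument applies to points of the boundary of the closed hull, since a supporting totally geodesic hyperplane separates them from a positive-measure part of $A$.

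Combining Steps 1–3, the point $p$ satisfies $\int_A G(\|X\|_p)X\,dv=0$, as required.
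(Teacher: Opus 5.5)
Your proof is correct, but it follows a different route from the cited result. The paper gives no proof of its own. It quotes the theorem from \cite{AS96} (via \cite{BS14}), where the standard argument is topological and goes back to Weinberger. There, $V(q)=\int_A G(\|X\|_q)X\,dv$ is treated as a continuous vector field on the compact convex set $CA$ (the closed hull, hence a topological ball). At each boundary point $q$, the set $A$ lies on one side of a supporting totally geodesic hyperplane through $q$, so $V(q)$ points into $CA$, and Brouwer's fixed point theorem gives a zero.

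You instead observe that $V=-\nabla F$ for the potential $F(q)=\int_A\Phi(d(q,x))\,dv$, obtain a zero as a global minimizer from a properness estimate, and place it in $\overline{CA}$ using the nearest-point projection, which strictly shortens every distance to points of $\overline{CA}$ by the hyperbolic law of cosines. Your route needs no fixed-point theorem and no discussion of the topology or boundary behaviour of $\overline{CA}$. It also works verbatim for any nonzero finite measure with compact support, in particular the surface measure on $\partial\Omega$, which is how the paper actually uses the theorem. The Brouwer route, in exchange, needs only continuity of $V$ and the inward-pointing property on $\partial(CA)$: no potential, no properness estimate, no distance comparison.

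Two small points to tighten:
\begin{itemize}
\item In Step 2, take the base point $O\in\overline{A}$, so that $d(q,x)\ge d(q,O)-\operatorname{diam}A$ for all $x\in A$.
\item Your remark on the non-closed hull is cleanest via the critical-point equation itself. Suppose $p\in\partial\overline{CA}$, let $P$ be a supporting totally geodesic hyperplane at $p$, and let $N$ be its unit normal at $p$ pointing toward $A$. Then $0=\int_A G(\|X\|_p)\langle X,N\rangle\,dv$, with an integrand that is $\ge 0$ everywhere and $>0$ off $P$. Since $|P|=0$, this forces $|A|=0$. Hence $p\in\operatorname{int}\overline{CA}\subset CA$, where the last inclusion follows from convexity, e.g.\ in the Klein model.
\end{itemize}
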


By applying the center of mass theorem for $A=\partial\Omega$ with respect to $G=g$ and choosing test functions $g\frac{x_i}{r}$ in the Rayleigh quotient \eqref{def-steklov}, we obtain
\begin{align*}
    \sigma_1(\Omega)\int_{\partial\Omega}g^2\frac{x_i^2}{r^2} d\mu\leq \int_{\Omega}|\nabla (g\frac{x_i}{r})|^2 dv,\qquad i=1,\ldots,n.
\end{align*}
Summing over $i$, we derive the upper bound of the first non-zero Steklov eigenvalue via $g$. For further details, see, e.g. \cites{AV22, BS14, Ver21}.
\begin{prop}\label{prop-upperbound-steklov}
Let $\Omega$ be a bounded domain in $\mathbb{H}^n$ with smooth boundary $\Sigma=\partial \Omega$. Then
\begin{align}\label{eq-upperbound-steklov}
\sigma_1(\Omega)\int_{\Sigma}g^2 d\mu\leq \int_{\Omega}\left((g')^2+(n-1)\frac{g^2}{\lambda^2}\right) dv.
\end{align}
\end{prop}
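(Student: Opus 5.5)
Fix the center of mass first: apply the center of mass theorem of \cite{BS14} with $A=\Sigma=\partial\Omega$ (with its surface measure) and weight $G=g$, which is continuous on $[0,\infty)$ and positive on $(0,\infty)$. This gives a point $p$ in the convex hull with $\int_\Sigma g(r)\frac{x_i}{r}\,d\mu=0$ for $i=1,\dots,n$, where $x_i$ are normal coordinates at $p$ and $r=|X|_p$ is the distance to $p$. Move $p$ to the origin $O$ by an isometry, so that $r$ is the polar radius of the warped product. Each $u_i=g(r)\frac{x_i}{r}$ is then admissible in \eqref{def-steklov}: it has zero boundary mean, and it lies in $H^1(\Omega)$ because $g(r)\sim r/n$ near $0$, so $u_i$ is Lipschitz near $O$. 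Hence $\sigma_1(\Omega)\int_\Sigma u_i^2\,d\mu\le\int_\Omega|\nabla u_i|^2\,dv$ for each $i$.

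Next, sum over $i$. Since $\sum_i x_i^2=r^2$, the boundary terms add up to $\int_\Sigma g^2\,d\mu$. For the gradients, write $x_i/r=\theta_i$, the restriction to the unit sphere of a coordinate function, and use the orthogonal splitting of the metric $dr^2+\lambda^2 g_{\mathbb{S}^{n-1}}$:
\[
|\nabla u_i|^2=(g')^2\theta_i^2+\frac{g^2}{\lambda^2}|\nabla^{\mathbb{S}^{n-1}}\theta_i|^2 .
\]
There is no cross term, because $\partial_r$ is orthogonal to the spherical directions. Two identities on $\mathbb{S}^{n-1}$ finish the computation: $\sum_i\theta_i^2=1$, and $\sum_i|\nabla^{\mathbb{S}^{n-1}}\theta_i|^2=n-1$, the latter because $|\nabla^{\mathbb{S}^{n-1}}\theta_i|^2=1-\theta_i^2$. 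Summing therefore gives exactly the integrand $(g')^2+(n-1)g^2/\lambda^2$ pointwise, and hence \eqref{eq-upperbound-steklov}.

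The computation itself is routine. The step needing care is applying the center of mass theorem, where one must check that the normal coordinates at $p$ and the radial function $r$ refer to the same point after the isometry, and that the weight is $G=g$ rather than $g/r$. In normal coordinates at $p$ the condition $\int_\Sigma G(\|X\|)X\,d\mu=0$ with $G=g(r)/r$ is exactly what is needed. So I would apply the theorem with $G(t)=g(t)/t$, extended by $G(0)=1/n$; this function is continuous on $[0,\infty)$ and positive on $(0,\infty)$. The remaining point is integrability at $O$: the term $g^2/\lambda^2$ stays bounded near $r=0$, since both $g$ and $\lambda$ behave like multiples of $r$ there, so every integral is finite.
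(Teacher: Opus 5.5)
Your proposal is correct and follows the paper's route: a center of mass normalization on $\Sigma=\partial\Omega$, the test functions $g(r)x_i/r$, and summation using $\sum_i\theta_i^2=1$ and $\sum_i|\nabla^{\mathbb{S}^{n-1}}\theta_i|^2=n-1$. Your final weight $G(t)=g(t)/t$ (with $G(0)=1/n$) is the right one for $\int_\Sigma g\,x_i/r\,d\mu=0$; the weight $G=g$ from your first paragraph, which is also how the paper loosely phrases it, only gives $\int_\Sigma g\,x_i\,d\mu=0$, so make your first paragraph consistent with your last.
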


\subsection{Inverse mean curvature flow}\label{subsec-imcf}$\ $

Curvature flows are important tools in the study of geometric inequalities. In this subsection, we recall the basic definitions and geometric evolution properties of the inverse mean curvature flow (IMCF) in hyperbolic space.

For a smooth closed hypersurface $\Sigma$ in $\mathbb{H}^n$, let $\nu$ be its outward unit normal vector field. The second fundamental form is defined as $h(X,Y)=\la \bar\nabla_X\nu,Y\ra$, and the mean curvature $H$ is the trace of the second fundamental form, i.e., $H=\mathrm{tr}_g h$. We say $\Sigma$ is mean convex if its mean curvature is strictly positive everywhere. Furthermore, $\Sigma$ is called star-shaped if it can be represented as a radial graph over a point, which is geometrically equivalent to $\la \partial_r,\nu\ra>0$ everywhere.

Let $\Omega_0\subset \mathbb{H}^n$ be a bounded domain with a smooth, star-shaped, and mean convex boundary $\Sigma_0=\partial\Omega_0$. Gerhardt \cite{Ger11} proved the well-posedness and asymptotic behavior of such hypersurfaces evolving along IMCF:
\begin{thm}[\cite{Ger11}]
Let $\Sigma_0\subset \mathbb{H}^n$ be a smooth, star-shaped, and mean convex closed hypersurface. Then there exists a family of smooth hypersurfaces $\Sigma_t=X(\cdot,t)$ satisfying the inverse mean curvature flow equation
\begin{equation}\label{eq-IMCF}
\left\{\begin{array}{l}\frac{\partial}{\partial t}X(x,t) = \frac{1}{H(x,t)}\nu(x,t),\\
X(\cdot,0) = \Sigma_0, \end{array}\right.
\end{equation}
where $\nu(x,t)$ is the outward unit normal of $\Sigma_t$.

The flow exists for all time $t\in [0,\infty)$. For any $t>0$, the evolving hypersurface $\Sigma_t$ remains smooth, star-shaped, and mean convex. As $t\to \infty$, $\Sigma_t$ expands to infinity, and all its principal curvatures converge exponentially to $1$.
\end{thm}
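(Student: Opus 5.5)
The plan is to reduce \eqref{eq-IMCF} to a scalar parabolic equation for a radial graph function, prove a priori estimates up to second order, and then obtain long-time existence and the asymptotics from the standard parabolic machinery.

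\textbf{Reduction to a scalar equation.} Since $\Sigma_0$ is star-shaped, write $\Sigma_t=\{(u(\theta,t),\theta):\theta\in\mathbb{S}^{n-1}\}$. Up to a tangential reparametrization, \eqref{eq-IMCF} is then equivalent to
\[
\partial_t u=\frac{v}{H},\qquad v=\sqrt{1+\lambda^{-2}(u)|D u|^2_{\mathbb{S}^{n-1}}}.
\]
It is convenient to introduce $\varphi(\theta,t)=\int^{u}\frac{d\rho}{\lambda(\rho)}$, so that $v=\sqrt{1+|D\varphi|^2}$ and $H$ is an explicit quasilinear elliptic expression in $D^2\varphi$. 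The resulting equation is
\[
\partial_t\varphi=\frac{v}{\lambda H}=:F(D^2\varphi,D\varphi,u).
\]
It is strictly parabolic exactly as long as $H>0$ and $v$ stays bounded. Short-time existence therefore follows from standard theory, and the work lies in a priori estimates on $[0,T)$ that are independent of $T$ in a suitably normalized form.

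\textbf{$C^0$ and $C^1$ estimates.} For $C^0$, I would compare with coordinate spheres. Under IMCF, a sphere $S(r(t))$ satisfies $\lambda^{n-1}(r(t))=e^{t}\lambda^{n-1}(r(0))$. By the maximum principle applied to $u$, one gets
\[
\min u(\cdot,0)\le u(\cdot,t)-r_{\rm sphere}(t)+r_{\rm sphere}(0)\le \max u(\cdot,0)
\]
in the appropriate sense, and hence $\lambda(u)\approx e^{t/(n-1)}$.

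For $C^1$, i.e.\ preservation of star-shapedness, I would use the support function $\langle V,\nu\rangle=\lambda/v$ and the identity $\bar\nabla V=\lambda' g_{\mathbb{H}^n}$ to derive its evolution equation. The maximum principle applied to $\omega=|D\varphi|^2/2$ at a spatial maximum then gives $\partial_t\omega\le -c\,\omega$, using the favorable sign produced by $\lambda'/\lambda\ge 1$. So $|D\varphi|$ stays bounded and in fact decays exponentially; in particular $\langle\partial_r,\nu\rangle>0$ persists.

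\textbf{$C^2$ estimates, which I expect to be the main obstacle.} I need two-sided bounds on $H$ and then an upper bound on the full second fundamental form.

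For a lower bound on $H$, I would use the evolution
\[
\partial_t H=-\Delta\tfrac1H-\tfrac{|A|^2-(n-1)}{H}
\]
(the Ricci term comes from curvature $-1$) and consider $\chi=1/(H\langle V,\nu\rangle)$, or $\lambda/H$ in the normalized variables. The evolution of $\chi$ has no bad zero-order terms, so the maximum principle bounds $\chi$ above, which bounds $H$ below after rescaling by $e^{-t/(n-1)}$. An upper bound for $H$ follows similarly from $\partial_t H\le -\Delta(1/H)+\frac{n-1-H^2/(n-1)}{H}$.

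For the full second fundamental form, I would apply the maximum principle to the largest eigenvalue of $h_i^j$ multiplied by a weight built from the support function. This is where the degenerate, fully nonlinear structure makes things delicate: the gradient terms must be absorbed using the lower bound on $H$ and the $C^1$ bound. This is the step I expect to be hardest; my first attempt would be the function $\log\kappa_{\max}-\log\langle V,\nu\rangle+\beta t$. Once $C^2$ is controlled, the equation is uniformly parabolic and concave in the rescaled variables, so Krylov--Safonov and Evans--Krylov give $C^{2,\alpha}$, Schauder bootstrapping gives all higher derivatives, and the flow exists for all $t\ge0$.

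\textbf{Asymptotics.} The exponential decay of $|D\varphi|$, combined with interpolation between the uniform higher-order bounds, shows that $\varphi-\varphi_{\rm sphere}$ becomes exponentially flat in every $C^k$. Since $u\to\infty$, the metric $\lambda^2 g_{\mathbb{S}^{n-1}}$ dominates and $\lambda'/\lambda\to1$. Computing $h_i^j$ for a nearly umbilic graph then gives $h_i^j=\frac{\lambda'}{\lambda v}\delta_i^j+O(|D^2\varphi|+|D\varphi|^2)\to\delta_i^j$ exponentially. This proves that the principal curvatures converge exponentially to $1$ and that $\Sigma_t$ expands to infinity.
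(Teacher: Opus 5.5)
The paper gives no proof of this statement; it is quoted from \cite{Ger11}, so your sketch has to stand on its own. Its architecture (radial graph, sphere barriers, maximum principle for $|D\varphi|^2$, curvature bounds, parabolic regularity) is the standard one. It suffices for long-time existence, where only bounds on each finite interval $[0,T]$ are needed. The gap is in the time-uniform estimates behind the asymptotic claim. In both places below it comes from importing Euclidean scaling into $\mathbb{H}^n$, where $H\to n-1$ rather than $H\to 0$.

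\textbf{First: the lower bound on $H$.} Your $\chi=1/(H\langle V,\nu\rangle)$ is exactly $\partial_t\varphi=v/(\lambda H)$, and the $|A|^2$-terms do cancel:
\[
\partial_t\ln\chi=\frac{1}{H^2}\Delta\ln\chi+\frac{1}{H^2}\Bigl\langle\nabla\ln\frac{\langle V,\nu\rangle}{H},\nabla\ln\chi\Bigr\rangle-\frac{n-1}{H^2}.
\]
But a constant bound on $\chi$ only gives $H\ge v/(C\lambda)\ge c\,e^{-t/(n-1)}$, because $\langle V,\nu\rangle=\lambda/v$ grows like $e^{t/(n-1)}$. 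That is the Euclidean normalization $He^{t/(n-1)}\ge c$, which in $\mathbb{H}^n$ is not a positive lower bound.

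Your $\kappa_{\max}$ step needs a genuine one. At a maximum of $\log\kappa_{\max}-\log\langle V,\nu\rangle+\beta t$ the surviving zero-order terms are $-\frac{2\kappa_{\max}}{H}+\frac{n-1}{H^2}+\beta$, so $\inf H$ must be bounded below uniformly in $t$. Likewise, the rate in $\partial_t\omega\le -c\,\omega$ is $c=2(n-1)/H^2$ at the maximum point, so it presupposes the upper bound on $H$, which you only prove later.

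This is repairable by using the good term $-\frac{n-1}{H^2}$. At a spatial maximum of $\ln\chi+\frac{t}{n-1}$, either $H\le n-1$ and the quantity does not increase, or $H>n-1$ and then $\chi<\frac{v}{(n-1)\lambda}\le Ce^{-t/(n-1)}$ by your $C^0$ and $C^1$ bounds. Hence $\chi\le Ce^{-t/(n-1)}$, i.e.\ $H\ge c>0$.

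\textbf{Second, and more seriously: uniform parabolicity.} The claim that after the $C^2$ estimate ``the equation is uniformly parabolic in the rescaled variables'' is false in $\mathbb{H}^n$. In coordinates on $\mathbb{S}^{n-1}$, the coefficient of $u_{ij}$ in $\partial_t u=v/H$ is $\tilde\sigma^{ij}/(\lambda^2H^2)$, where $\tilde\sigma^{ij}=\sigma^{ij}-\varphi^i\varphi^j/v^2$. In $\mathbb{R}^n$ one has $H\sim (n-1)/\lambda$, so this coefficient is of order one. Here $H$ stays bounded while $\lambda\sim e^{t/(n-1)}$, so for $\varphi$ or for $\tilde u=u-\frac{t}{n-1}$ the ellipticity constant decays like $e^{-2t/(n-1)}$.

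Consequently Krylov--Safonov and Schauder only give $C^k$ bounds that deteriorate as $t\to\infty$, and your interpolation step has nothing uniform to interpolate with. The curvature bound does not rescue it either. The correct identity is $h^i_j-\frac{\lambda'}{\lambda v}\delta^i_j=-\frac{1}{\lambda v}\tilde\sigma^{ik}\varphi_{kj}$, of size $|D^2\varphi|/\lambda$ (your error term dropped the factor $\lambda^{-1}$). Meanwhile $|A|\le C$ only yields $|D^2\varphi|\le C\lambda$, so nothing forces $h^i_j\to\delta^i_j$. The degeneracy is genuine: it is also why, unlike in $\mathbb{R}^n$, the rescaled hypersurfaces need not become round, as observed by Hung and Wang. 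So a hyperbolic-specific step is required.

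\textbf{A possible fix.} One clean route is the time change $d\tau=e^{-2t/(n-1)}dt$. The equation
\[
\partial_\tau\tilde u=e^{2t/(n-1)}\bigl(\tfrac{v}{H}-\tfrac{1}{n-1}\bigr)
\]
then has three useful features:
\begin{enumerate}
\item its leading coefficient is $\frac{e^{2t/(n-1)}}{\lambda^2H^2}\tilde\sigma^{ij}\approx 4e^{-2\tilde u}H^{-2}\tilde\sigma^{ij}$, which is uniformly elliptic;
\item it lives on the finite interval $[0,\frac{n-1}{2})$;
\item it depends smoothly on $e^{-2t/(n-1)}=1-\frac{2\tau}{n-1}$ up to the endpoint.
\end{enumerate}
This requires $c\le H\le C$ and $|D\tilde u|\le C$. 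The latter follows from your $\omega$-estimate once you also note $(\max H-(n-1))_+\le Ce^{-t/(n-1)}$, which upgrades it to $|D\varphi|\le Ce^{-t/(n-1)}$, i.e.\ $|D\tilde u|=\lambda|D\varphi|\le C$. With these bounds, parabolic regularity on the finite $\tau$-interval gives uniform $C^k$ bounds on $\tilde u$. The identity above then yields $|h^i_j-\delta^i_j|\le Ce^{-2t/(n-1)}$.
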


During the evolution of IMCF, the evolution equations of geometric quantities are crucial for establishing monotone formulas.
\begin{prop}
Along the inverse mean curvature flow \eqref{eq-IMCF}, the induced metric $g_{ij}$ and the area element $d\mu_t$ satisfy the following evolution equations:
\begin{align}
\frac{\partial}{\partial t} g_{ij} &= \frac{2}{H} h_{ij}, \\
\frac{\partial}{\partial t} d\mu_t &= d\mu_t, \label{eq-dmut}
\end{align}
where $h_{ij}$ is the second fundamental form. In particular, \eqref{eq-dmut} implies that the surface area grows exponentially, i.e., $|\Sigma_t|=|\Sigma_0|e^t$.

Furthermore, for a smooth function $F$ defined on $\Omega_t$ that does not depend on time $t$, the coarea formula yields the following integral evolution formula:
\begin{equation}\label{eq-dvt}
\frac{d}{dt}\int_{\Omega_t} F dv = \int_{\Sigma_t} \frac{F}{H} d\mu_t.
\end{equation}
\end{prop}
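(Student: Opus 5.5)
The plan is a direct first-variation computation for a hypersurface moving with normal speed $f=1/H$, using only the flow equation \eqref{eq-IMCF} and the smoothness and mean convexity of $\Sigma_t$ from Gerhardt's theorem.

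\textbf{Metric.} In local coordinates $x^i$ on $\Sigma_0$, write $g_{ij}=\la \partial_i X,\partial_j X\ra$. Since the ambient connection $\bar\nabla$ is torsion free and metric, we can commute $\partial_t$ with $\partial_i$, which gives
\[
\partial_t g_{ij}=\la \bar\nabla_{\partial_i}(f\nu),\partial_j X\ra+\la \partial_i X,\bar\nabla_{\partial_j}(f\nu)\ra .
\]
Expand $\bar\nabla_{\partial_i}(f\nu)=(\partial_i f)\nu+f\bar\nabla_{\partial_i}\nu$. The term $(\partial_i f)\nu$ drops out because $\nu\perp \partial_jX$. With the convention $h(X,Y)=\la\bar\nabla_X\nu,Y\ra$ and the symmetry of $h$, what remains is $\partial_t g_{ij}=2fh_{ij}=\frac{2}{H}h_{ij}$.

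\textbf{Area element.} Since $d\mu_t=\sqrt{\det g_{ij}}\,dx$, Jacobi's formula gives
\[
\partial_t d\mu_t=\tfrac12 g^{ij}\partial_t g_{ij}\,d\mu_t=\tfrac{1}{H}\,g^{ij}h_{ij}\,d\mu_t=d\mu_t,
\]
because $H=\mathrm{tr}_g h$. Integrating over the fixed parameter manifold $\Sigma_0$ yields $\frac{d}{dt}|\Sigma_t|=|\Sigma_t|$, hence $|\Sigma_t|=|\Sigma_0|e^t$.

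\textbf{Domain integrals.} Because $H>0$ along the flow, the normal speed is positive, so the domains $\Omega_t$ increase strictly. The map $(x,t)\mapsto X(x,t)$ is then a diffeomorphism of $\Sigma_0\times[t_0,t_1]$ onto $\overline{\Omega_{t_1}}\setminus\Omega_{t_0}$. Its Jacobian in these coordinates is $\la \partial_tX,\nu\ra\,d\mu_t\,dt=\frac1H\,d\mu_t\,dt$; equivalently, $t$ is a level-set function whose gradient has norm $|\bar\nabla t|=H$, and one applies the coarea formula. It follows that
\[
\int_{\Omega_{t_1}}F\,dv-\int_{\Omega_{t_0}}F\,dv=\int_{t_0}^{t_1}\int_{\Sigma_t}\frac{F}{H}\,d\mu_t\,dt
\]
for any time-independent smooth $F$. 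Differentiating in $t_1$ gives \eqref{eq-dvt}. Alternatively, this is the Reynolds transport theorem: the boundary flux of a velocity field whose normal component is $1/H$.

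\textbf{Main obstacle.} None of these steps is deep. The only points needing care are the following.
\begin{itemize}
\item The tangential part of the velocity vanishes, so no reparametrization terms appear.
\item The positivity $H>0$ for all $t$ is what makes $1/H$ finite and the foliation by $\Sigma_t$ legitimate. This is supplied by Gerhardt's theorem.
\end{itemize}
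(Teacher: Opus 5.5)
Your proposal is correct and follows the same route the paper intends. The paper states these formulas without proof, pointing only to the standard first-variation computations and the coarea formula; you supply exactly those: $\partial_t g_{ij}=2fh_{ij}$ with $f=1/H$, Jacobi's formula for $d\mu_t$, and the coarea formula with $|\bar\nabla t|=H$. The one step you assert rather than prove is that the $\Omega_t$ are strictly nested, so that $(x,t)\mapsto X(x,t)$ is a diffeomorphism onto $\overline{\Omega_{t_1}}\setminus\Omega_{t_0}$; it follows at once from writing the star-shaped $\Sigma_t$ as radial graphs with $\partial_t u=\frac{1}{H\langle\partial_r,\nu\rangle}>0$, or can be bypassed by the Reynolds-transport version you also mention.
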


\subsection{Mass transplantation}$\ $

Finally, we state the mass transplantation argument due to Weinberger \cite{Wei56}, which is a powerful tool for estimates involving integrals. A clear proof can be seen in \cite{FL21}.
\begin{thm}[Mass transplantation]\label{masstransplantation}
Let $\Omega$ be a bounded Lipschitz domain in $\mathbb{H}^n$ with the same volume as $B(R)$. If $f(r)$ is decreasing and integrable on $[0,+\infty)$, then
\begin{equation}\label{mass-ineq}
    \int_{\Omega}f(r) dv\leq\int_{B(R)}f(r) dv.
\end{equation}
If in addition $f(r)$ is strictly decreasing, then equality holds if and only if $\Omega=B(R)$. If $f(r)$ is increasing and integrable on $[0,+\infty)$, then the inequality reverses direction.
\begin{proof}
Since $f$ is decreasing and $|\Omega|=|B(R)|$, we have
\begin{equation}\label{mass-ineq2}
\begin{split}
     \int_{\Omega}f(r) dv
    =&\int_{\Omega\cap B(R)}f(r) dv+\int_{\Omega\backslash B(R)}f(r) dv\\
    \leq& \int_{\Omega\cap B(R)}f(r) dv+|\Omega\backslash B(R)|f(R)\\
    =&\int_{\Omega\cap B(R)}f(r) dv+|B(R)\backslash \Omega|f(R)\\
    \leq& \int_{\Omega\cap B(R)}f(r) dv+\int_{B(R)\backslash \Omega}f(r) dv=\int_{B(R)}f(r) dv.
\end{split}
\end{equation}
It suffices to show that the inequality (\ref{mass-ineq}) is strict when $f(r)$ is strictly decreasing. If $\Omega\nsubseteq B(R)$, then $\Omega$ contains a point at radius $r>R$ and thus contains a neighborhood outside $B(R)$. Consequently, $|\Omega\backslash B(R)|>0$, and the first inequality in (\ref{mass-ineq2}) is strict because $f(r)$ is strictly decreasing. Similarly, if $B(R)\nsubseteq \Omega$, the second inequality in (\ref{mass-ineq2}) is strict. This completes the proof of Theorem \ref{masstransplantation}.
\end{proof}
\end{thm}

\section{The volume-area ratio function $g(r)$}\label{sec-3}
In this section, we study the properties of $g(r)$ defined in \eqref{eq-def-g}.

\subsection{The geometric properties of $g(r)$}$\ $

Notice that the radial part $g(r)$ of the first non-zero Steklov eigenfunction of a geodesic ball is geometrically exactly the ratio of the volume to the surface area of the geodesic ball $B(r)$. This geometric intuition can be rigorously characterized by the following divergence property.

By using the expression for $g(r)$ given in \eqref{eq-def-g}, we have
\begin{align}\label{eq-deri-ln-1g}
(\lambda^{n-1}(r)g(r))'=\left(\int_0^r\lambda^{n-1}(t)dt\right)'=\lambda^{n-1}(r),
\end{align}
and then
\begin{align}\label{eq-deri-g}
g'=1-(n-1)\frac{\lambda' g}{\lambda}.
\end{align}

\begin{prop}\label{prop-divgY}
Let $Y:= g(r) \partial_r$ be a vector field on $\mathbb{H}^n$. Then its divergence is identically $1$, i.e.,
\begin{equation}\label{div-Y}
    \divg(Y)=1.
\end{equation}

\begin{proof}
For each $p\in \mathbb{H}^n$, choose an orthonormal frame $\{e_1,\ldots,e_n\}$ around $p$ such that $e_n=\partial_r$. Recall the vector field $V:=\lambda(r)\partial_r$ is a conformal Killing field on $\mathbb{H}^n$, which satisfies
\begin{align}\label{eq-killing}
\la \bar\nabla_{e_i}V,e_j\ra=\lambda'(r)\la e_i,e_j\ra,
\end{align}
see e.g. \cite[Lemma 2.2]{Bre13}. We can write $Y$ as $Y = \frac{g}{\lambda}V$. Then it follows from \eqref{eq-deri-g} that
\begin{align*}
    \divg(Y)=&\divg\left(\frac{g}{\lambda}V\right)
    =\la \bar\nabla\left(\frac{g}{\lambda}\right), V\ra + \frac{g}{\lambda}\divg(V)\\
    =&\left(\frac{g}{\lambda}\right)'\lambda+n\left(\frac{g}{\lambda}\right)\lambda'
    =g'+(n-1)\frac{\lambda' g}{\lambda}=1.
\end{align*}
\end{proof}
\end{prop}

A direct corollary follows from Proposition \ref{prop-divgY} and the divergence theorem, which allows us to control the volume of a bounded domain using the boundary integral of $g(r)$.

\begin{cor}\label{cor-g}
For any bounded domain $\Omega \subset \mathbb{H}^n$, we have the following inequality:
\begin{align}\label{eq-g}
\int_{\partial\Omega}g d\mu\geq |\Omega|.
\end{align}
Equality holds if and only if $\Omega$ is a geodesic ball centered at the origin.
\begin{proof}
Applying the divergence theorem on $\Omega$ and using Proposition \ref{prop-divgY}, since $\la \partial_r,\nu\ra\leq 1$ and $g\geq 0$, we have
\begin{align*}
    |\Omega|=\int_{\Omega}dv=\int_{\Omega}\divg(g(r)\partial_ r) dv=\int_{\partial\Omega}g\la \partial_r,\nu\ra d\mu\leq \int_{\partial\Omega}g d\mu.
\end{align*}
Equality holds if and only if $\nu = \partial_r$ everywhere on $\partial\Omega$, which implies that $r$ is constant on the boundary, meaning $\Omega$ must be a geodesic ball centered at the origin.
\end{proof}
\end{cor}

Similarly, analogous to the volume control, we can derive a lower bound for the boundary integral of $g^2$ using the mass transplantation argument.

\begin{prop}\label{prop-g2}
For any bounded domain $\Omega \subset \mathbb{H}^n$, let $B(R)$ be a geodesic ball with the same volume as $\Omega$, i.e., $|B(R)| = |\Omega|$. Then
\begin{align}\label{eq-intg2}
\int_{\partial\Omega}g^2 d\mu\geq g(R)|B(R)|.
\end{align}
Equality holds if and only if $\Omega$ is a geodesic ball.
\begin{proof}
First, it follows from \eqref{eq-deri-g} and \eqref{eq-killing} that
\begin{align*}
    \divg(g^2\partial_r)=\divg\left(\frac{g^2}{\lambda}V\right)=\left(\frac{g^2}{\lambda}\right)'\lambda+n\left(\frac{g^2}{\lambda}\right)\lambda'=g(1+g'),
\end{align*}
and then
\begin{align*}
    \int_{\partial\Omega}g^2 d\mu\geq \int_{\partial\Omega}g^2\la \partial_r,\nu\ra d\mu=\int_{\Omega}g(1+g') dv.
\end{align*}

Next, we demonstrate the monotonicity of $g(1+g')$ as follows:
\begin{align*}
    (g(1+g'))'=& g'+(g')^2+gg''\\
    =& g'+(g')^2+g\left((n-1)\frac{g}{\lambda^2}-(n-1)\frac{\lambda'g'}{\lambda}\right)\\
    =& g'\left(1-(n-1)\frac{\lambda'g}{\lambda}\right)+(g')^2+(n-1)\frac{g^2}{\lambda^2}\\
    =& 2(g')^2+(n-1)\frac{g^2}{\lambda^2}\geq 0.
\end{align*}
Since $g(1+g')$ is a radially increasing function, using the mass transplantation argument, we conclude
\begin{align*}
    \int_{\Omega}g(1+g') dv & \geq \int_{B(R)}g(1+g') dv
    = \omega_{n-1}\int_0^R g(1+g')\lambda^{n-1}dt\\
    &= \omega_{n-1}\int_0^R (\lambda^{n-1}g^2)' dt
    = \omega_{n-1} \lambda^{n-1}(R)g^2(R)\\
    &= g(R) |B(R)|.
\end{align*}
This completes the proof of (\ref{eq-intg2}).
\end{proof}
\end{prop}

\subsection{The analytic properties of $g(r)$}$\ $

To construct effective monotone quantities, we need to thoroughly examine the differential properties of $g(r)$. The following properties show that $g(r)$ is an increasing concave function with specific asymptotic behaviors.

\begin{prop}\label{prop-g}
The function $g$ is an increasing concave function, with the following limits:
\begin{align}\label{eq-g-lim}
\lim_{r\to 0}\frac{g}{\lambda}=\frac{1}{n},\qquad \lim_{r\to \infty}g=\frac{1}{n-1},\qquad \lim_{r\to 0}g'=\frac{1}{n},\qquad \lim_{r\to \infty}g'=0.
\end{align}
Furthermore, if $n\geq 4$, we have
\begin{align}\label{eq-g-lim2}
\lim_{r\to\infty}\lambda^2g'=\frac{1}{n-3}.
\end{align}
\begin{proof}
First, since $\left.\lambda^{n-1}\left(g-\frac{1}{n-1}\frac{\lambda}{\lambda'}\right)\right|_{r=0}=0$ and
\begin{align*}
    \left(\lambda^{n-1}(g-\frac{1}{n-1}\frac{\lambda}{\lambda'})\right)'=\lambda^{n-1}-\frac{1}{n-1}\frac{n\lambda^{n-1}(\lambda')^2-\lambda^{n+1}}{(\lambda')^2}\leq 0,
\end{align*}
we deduce the upper bound estimate
\begin{align*}
    g\leq \frac{1}{n-1}\frac{\lambda}{\lambda'},
\end{align*}
and substituting this into the expression for $g'$ gives
\begin{align*}
    g'=1-(n-1)\frac{\lambda' g}{\lambda}\geq 0.
\end{align*}

Furthermore, since $\left.\lambda^{n-1}\left(g-\frac{\lambda\lambda'}{(n-1)\lambda^2+n}\right)\right|_{r=0}=0$ and
\begin{align*}
    \left(\lambda^{n-1}\left(g-\frac{\lambda\lambda'}{(n-1)\lambda^2+n}\right)\right)' 
    &= \lambda^{n-1} \left( 1 - \frac{((n-1)\lambda^2+n)^2 + 2\lambda^2}{((n-1)\lambda^2+n)^2} \right) \\
    &= -\frac{2\lambda^{n+1}}{((n-1)\lambda^2+n)^2} \leq 0,
\end{align*}

we have a finer upper bound
\begin{align*}\label{eq-gll'}
g\leq \frac{\lambda\lambda'}{(n-1)\lambda^2+n},
\end{align*}
which implies
\begin{align*}
    g''=(n-1)\frac{g}{\lambda^2}-(n-1)g'\frac{\lambda'}{\lambda}=\frac{n-1}{\lambda^2}(g((n-1)\lambda^2+n)-\lambda\lambda')\leq 0.
\end{align*}
This proves that $g(r)$ is an increasing concave function.

Finally, the limits follow directly from l'H\^opital's rule:
\begin{align*}
    \lim_{r\to 0}\frac{g}{\lambda}
    =& \lim_{r\to 0}\frac{\int_0^r\lambda^{n-1}(t)dt}{\lambda^n(r)}=\lim_{r\to 0}\frac{\lambda^{n-1}(r)}{n\lambda^{n-1}(r)\lambda'(r)}=\frac{1}{n},\\
    \lim_{r\to \infty}g
    =& \lim_{r\to \infty}\frac{\int_0^r\lambda^{n-1}(t)dt}{\lambda^{n-1}(r)}=\lim_{r\to \infty}\frac{\lambda^{n-1}(r)}{(n-1)\lambda^{n-2}(r)\lambda'(r)}=\frac{1}{n-1},\\
    \lim_{r\to 0}g'
    =& 1-(n-1)\lim_{r\to 0}\frac{\lambda'g}{\lambda}=1-(n-1)\lim_{r\to 0}\frac{\int_0^r\lambda^{n-1}(t)dt}{\lambda^n(r)}=1-\frac{n-1}{n}=\frac{1}{n},\\
    \lim_{r\to \infty}g'
    =& 1-(n-1)\lim_{r\to \infty}\frac{\lambda'g}{\lambda}=1-(n-1)\cdot\frac{1}{n-1}=0,
\end{align*}
and if $n\geq 4$,
\begin{align*}
    \lim_{r\to \infty}\lambda^2g'
    =& \lim_{r\to \infty}\frac{\lambda^n-(n-1)\lambda'\int_0^r\lambda^{n-1}}{\lambda^{n-2}}\\
    =& \lim_{r\to\infty}\frac{\frac{\lambda^n}{\lambda'}-(n-1)\int_0^r\lambda^{n-1}}{\lambda^{n-3}}\\
    =& \lim_{r\to\infty}\frac{\frac{(n-1)\lambda^{n+1}+n\lambda^{n-1}}{\lambda^2+1}-(n-1)\lambda^{n-1}}{(n-3)\lambda^{n-4}\lambda'}\\
    =& \lim_{r\to\infty}\frac{\lambda^{n-1}}{(n-3)\lambda^{n-4}\lambda'(\lambda^2+1)}=\frac{1}{n-3}.
\end{align*}

\end{proof}
\end{prop}

Since $\frac{\lambda' g}{\lambda}=\frac{1}{n-1}(1-g')$, Proposition \ref{prop-g} immediately yields the following proposition.
\begin{prop}\label{prop-l'g/l}
The function
\begin{align*}
    \frac{\lambda'(r)g(r)}{\lambda(r)}
\end{align*}
is increasing on $(0,\infty)$, and satisfies the bounds
\begin{align*}
    \frac{1}{n}\leq \frac{\lambda'(r)g(r)}{\lambda(r)}\leq \frac{1}{n-1}.
\end{align*}
\end{prop}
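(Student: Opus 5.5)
The statement follows directly from the identity \eqref{eq-deri-g}, which we rewrite as
\begin{align*}
\frac{\lambda'(r)g(r)}{\lambda(r)}=\frac{1}{n-1}\bigl(1-g'(r)\bigr).
\end{align*}
This reduces every claim about $\frac{\lambda' g}{\lambda}$ to a claim about $g'$, and Proposition \ref{prop-g} supplies the needed facts about $g'$.

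\textbf{Monotonicity.} By Proposition \ref{prop-g}, $g$ is concave on $(0,\infty)$, so $g''\leq 0$ and $g'$ is non-increasing. Hence $1-g'$ is non-decreasing, and so is $\frac{\lambda' g}{\lambda}$. If strict monotonicity is wanted, we use the explicit expression from the proof of Proposition \ref{prop-g}:
\begin{align*}
g''=\frac{n-1}{\lambda^2}\Bigl(g\bigl((n-1)\lambda^2+n\bigr)-\lambda\lambda'\Bigr).
\end{align*}
There we showed that $\lambda^{n-1}\bigl(g-\frac{\lambda\lambda'}{(n-1)\lambda^2+n}\bigr)$ vanishes at $r=0$ and has derivative $-\frac{2\lambda^{n+1}}{((n-1)\lambda^2+n)^2}$, which is strictly negative for $r>0$. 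So this quantity is strictly negative on $(0,\infty)$, which gives $g''<0$ there and hence strict monotonicity of $\frac{\lambda' g}{\lambda}$.

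\textbf{Bounds.} Since $g'$ is non-increasing, its values lie between its limits at the endpoints. Proposition \ref{prop-g} gives $\lim_{r\to0}g'=\frac1n$ and $\lim_{r\to\infty}g'=0$, so $0\leq g'\leq \frac1n$ on $(0,\infty)$. Substituting into the identity above yields
\begin{align*}
\frac{1}{n}=\frac{1}{n-1}\Bigl(1-\frac1n\Bigr)\leq \frac{\lambda' g}{\lambda}\leq \frac{1}{n-1}.
\end{align*}
The upper bound is also exactly the estimate $g\leq \frac{1}{n-1}\frac{\lambda}{\lambda'}$ proved in Proposition \ref{prop-g}, which gives a consistency check.

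\textbf{Main point of care.} The only step needing attention is passing from the endpoint limits of $g'$ to global bounds. This is justified by the monotonicity of $g'$ from concavity: a monotone function on $(0,\infty)$ takes values between its limits at $0$ and at $\infty$.
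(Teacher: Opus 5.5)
Your proposal is correct and is exactly the paper's argument: the paper derives this proposition immediately from the identity $\frac{\lambda' g}{\lambda}=\frac{1}{n-1}(1-g')$, together with the concavity of $g$ and the limits of $g'$ in Proposition \ref{prop-g}. Your extra check of strict monotonicity, via the strict negativity of $\lambda^{n-1}\bigl(g-\frac{\lambda\lambda'}{(n-1)\lambda^2+n}\bigr)$ for $r>0$, is also valid, though the paper does not spell it out.
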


On the other hand, we also need to control higher-order terms involving derivatives:
\begin{prop}\label{prop-ll'g'/g}
The function
\begin{align*}
    \frac{\lambda(r)\lambda'(r)g'(r)}{g(r)}
\end{align*}
is increasing on $(0,\infty)$. In particular, if $n\geq 4$, we have the upper bound estimate:
\begin{align*}
    \frac{\lambda(r)\lambda'(r)g'(r)}{g(r)}\leq \frac{n-1}{n-3}.
\end{align*}

\begin{proof}
First, we compute the derivative to show that $\frac{\lambda\lambda'g'}{g}$ is increasing:
\begin{align*}
    \left(\frac{\lambda\lambda'g'}{g}\right)'=\left(\frac{\lambda\lambda'}{g}-(n-1)(\lambda^2+1)\right)'=\frac{((n+1)\lambda^2+n)g-\lambda\lambda'}{g^2}-2(n-1)\lambda\lambda'.
\end{align*}
It suffices to show that the numerator is non-negative, which is equivalent to verifying that $g(r)$ satisfies the following quadratic inequality:
\begin{align*}
    2(n-1)\lambda\lambda'g^2 - ((n+1)\lambda^2+n)g + \lambda\lambda' \leq 0,
\end{align*}
which can be rewritten as
\[
\left(g-\frac{(n+1)\lambda^2+n}{4(n-1)\lambda\lambda'}\right)^2\leq \frac{(n-3)^2\lambda^4+2(n^2-3n+4)\lambda^2+n^2}{(4(n-1)\lambda\lambda')^2}.
\]

For $n=2$, direct calculation shows that $\frac{\lambda\lambda'g'}{g}=\lambda'$ is increasing. From now on, we focus on the case $n\geq 3$. By Proposition \ref{prop-g}, we have
\begin{align*}
    g\leq \frac{\lambda}{(n-1)\lambda'}\leq \frac{(n+1)\lambda^2+n}{4(n-1)\lambda\lambda'}.
\end{align*}
This implies $g(r)$ is always smaller than the axis of symmetry of the quadratic function, thus we need only to prove that $g(r)$ is greater than the smaller root:
\begin{align*}
    g\geq \frac{(n+1)\lambda^2+n-\sqrt{(n-3)^2\lambda^4+2(n^2-3n+4)\lambda^2+n^2}}{4(n-1)\lambda\lambda'}.
\end{align*}

For convenience, denote
\begin{align*}
    A=\sqrt{(n-3)^2\lambda^4+2(n^2-3n+4)\lambda^2+n^2},
\end{align*}
then $A'=\frac{2(n-3)^2\lambda^3\lambda'+2(n^2-3n+4)\lambda\lambda'}{A}$. It suffices to show that
\begin{align*}
    \lambda^{n-1}g\geq \frac{2\lambda^n\lambda'}{(n+1)\lambda^2+n+A}.
\end{align*}
Equality holds at $r=0$. By taking a derivative, we need only to prove that 
{\small\begin{align*}
    & \lambda^{n-1}\geq \frac{2((n+1)\lambda^{n+1}+n\lambda^{n-1})((n+1)\lambda^2+n+A)-2\lambda^n\lambda'(2(n+1)\lambda\lambda'+A')}{((n+1)\lambda^2+n+A)^2} \\
    \Longleftrightarrow & ((n+1)\lambda^2+n+A)^2\geq 2((n+1)\lambda^2+n)((n+1)\lambda^2+n+A)-2\lambda\lambda'(2(n+1)\lambda\lambda'+A') \\
    \Longleftrightarrow & 2\lambda\lambda'\left(2(n+1)\lambda\lambda'+\frac{2(n-3)^2\lambda^3\lambda'+2(n^2-3n+4)\lambda\lambda'}{A}\right)\geq ((n+1)\lambda^2+n)^2-A^2\\
    \Longleftrightarrow & 2\lambda\left(2(n+1)\lambda+\frac{2(n-3)^2\lambda^3+2(n^2-3n+4)\lambda}{A}\right)(\lambda^2+1)\geq 8(n-1)\lambda^2(\lambda^2+1)\\
     \Longleftrightarrow & (n-3)^2\lambda^2+(n^2-3n+4)\geq (n-3)A.
\end{align*}}

By squaring both sides of the above inequality, it suffices to show that
\begin{align*}
         & (n-3)^4\lambda^4+2(n-3)^2(n^2-3n+4)\lambda^2+(n^2-3n+4)^2 \\
    \geq &  (n-3)^2((n-3)^2\lambda^4+2(n^2-3n+4)\lambda^2+n^2),
\end{align*}
which is clearly correct. Therefore, the function $\frac{\lambda\lambda'g'}{g}$ is increasing. 

Moreover, when $n \geq 4$, it follows from \eqref{eq-g-lim} and \eqref{eq-g-lim2} that
\[
\frac{\lambda\lambda'g'}{g}\leq \lim_{r\to\infty}\frac{\lambda\lambda'g'}{g}=\lim_{r\to\infty}\frac{\lambda^2g'}{g}=\frac{n-1}{n-3}.
\]
\end{proof}
\end{prop}

By Proposition \ref{prop-ll'g'/g}, the function $\frac{\lambda^2g'}{g}=\frac{\lambda\lambda'g'}{g}\cdot\frac{\lambda}{\lambda'}$ is a product of two positive increasing functions. Hence, we have
\begin{prop}\label{prop-l2g'/g-n=4}
The function
\begin{align*}
    \frac{\lambda^2(r) g'(r)}{g(r)}
\end{align*}
is increasing.
\end{prop}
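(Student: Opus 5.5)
The plan is to factor the function as a product of two positive, nondecreasing functions on $(0,\infty)$:
\[
\frac{\lambda^2 g'}{g}=\frac{\lambda\lambda' g'}{g}\cdot\frac{\lambda}{\lambda'} .
\]
Here $\lambda=\sinh r$ and $\lambda'=\cosh r$, so the identity is just $\lambda^2=\lambda\lambda'\cdot\frac{\lambda}{\lambda'}$. If both factors are positive and nondecreasing, their product is increasing by the elementary estimate
\[
(fh)(r_2)-(fh)(r_1)=f(r_2)\bigl(h(r_2)-h(r_1)\bigr)+h(r_1)\bigl(f(r_2)-f(r_1)\bigr)\ge 0 .
\]
This uses only nonnegativity of $f$ and $h$ together with monotonicity.

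First I would check the factor $\frac{\lambda}{\lambda'}=\tanh r$. It is positive on $(0,\infty)$, and its derivative is $1-\tanh^2 r=\frac{1}{\cosh^2 r}>0$, so it is strictly increasing.

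Second, consider the factor $\frac{\lambda\lambda' g'}{g}$. Proposition \ref{prop-ll'g'/g} already shows it is increasing on $(0,\infty)$ for every $n$; the monotonicity part there did not use $n\ge4$. For positivity, note that $g>0$ on $(0,\infty)$ since it is a ratio of positive integrals, and $\lambda\lambda'>0$. It remains to see that $g'>0$. Proposition \ref{prop-g} gives $g'\ge 0$, and in fact the proof gives the strict inequality $g<\frac{\lambda}{(n-1)\lambda'}$ for $r>0$, because the derivative computed there is strictly negative for $r>0$. Alternatively, $g'\ge0$ alone suffices for weak monotonicity. Strictness of the product then follows from $\tanh$ being strictly increasing and the factor $\frac{\lambda\lambda' g'}{g}$ being positive.

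There is no real obstacle, since both monotonicity facts were established earlier in the paper. The only point that needs care is confirming the signs (positivity of $g$ and $g'$), so that the product-of-increasing-functions argument applies.
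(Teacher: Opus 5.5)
Your proposal is correct and follows the paper's proof: the paper writes $\frac{\lambda^2g'}{g}=\frac{\lambda\lambda'g'}{g}\cdot\frac{\lambda}{\lambda'}$ and invokes Proposition~\ref{prop-ll'g'/g} together with the monotonicity of $\tanh r$. Your extra checks match what the paper uses only implicitly. The monotonicity part of Proposition~\ref{prop-ll'g'/g} does not need $n\ge 4$. The strict positivity of $g'$ holds because the derivative in the proof of Proposition~\ref{prop-g} equals $-\frac{\lambda^{n-1}}{(n-1)\lambda'^2}<0$ for $r>0$.
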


Notice that 
\begin{align}
    \divg(gg'\partial_r)=(g')^2+(n-1)\frac{g^2}{\lambda^2}.
\end{align}
Then the expression \eqref{eq-sigmabr} for $\sigma_1(B(r))$ can be rewritten as
\begin{equation}\label{eq-sigmabr2}
    \sigma_1(B(r))=\frac{\int_{S(r)}gg'd\mu}{g(r)^2|S(r)|}=\frac{g'(r)}{g(r)}.
\end{equation}
Combining Proposition \ref{prop-l2g'/g-n=4} with (\ref{eq-S}) and (\ref{eq-sigmabr2}), we obtain the following key lemma.

\begin{lem}\label{lem-ln-1g'/g}
In the hyperbolic space $\mathbb{H}^n$ ($n \ge 3$), the geometric quantities
\begin{align}\label{eq-mono-n=4}
|S(r)|^{\frac{2}{n-1}}\sigma_1(B(r))
\end{align}
and
\begin{align}\label{eq-mono-n>=5}
|S(r)|\sigma_1(B(r))
\end{align}
are both increasing functions of the radius $r$.
\begin{proof}
Using $|S(r)| = \omega_{n-1}\lambda(r)^{n-1}$ and $\sigma_1(B(r)) = \frac{g'(r)}{g(r)}$, we have
\begin{equation*}
|S(r)|^{\frac{2}{n-1}}\sigma_1(B(r)) = (\omega_{n-1})^{\frac{2}{n-1}} \lambda^2(r) \frac{g'(r)}{g(r)}.
\end{equation*}
According to Proposition \ref{prop-l2g'/g-n=4}, $\frac{\lambda^2 g'}{g}$ is increasing, so the first geometric quantity is increasing. 

For the second geometric quantity, we can write it as:
\begin{equation*}
|S(r)|\sigma_1(B(r)) = \omega_{n-1} \lambda(r)^{n-3} \cdot \left( \lambda^2(r) \frac{g'(r)}{g(r)} \right).
\end{equation*}
When $n \geq 3$, $\lambda(r)^{n-3}$ is non-decreasing. Since the product of two positive increasing functions is still an increasing function, the conclusion holds.
\end{proof}
\end{lem}
\section{Weinstock inequality in hyperbolic space}\label{sec-4}

From now on, let $R$ be the positive number such that $|B(R)|=|\Omega|$ in $\mathbb{H}^n$.

\subsection{The auxiliary function $h$ and an inequality from the IMCF} $\ $

To simplify the exposition, we define the following integral function $\mathcal{V}(t)$, which represents the volume-related term in our monotone quantity:
\begin{equation}
\mathcal{V}(t):=\int_{B(t)}\frac{\lambda'(r)g(r)}{\lambda(r)}dv = \omega_{n-1}\int_0^t \lambda^{n-2}(s)\lambda'(s)g(s) ds,\qquad t>0.
\end{equation}
Clearly, $\mathcal{V}(t)$ is a strictly increasing function of $t$, and its range is $(0,\infty)$. Thus, we can define an auxiliary function $h:(0,\infty)\to (0,\infty)$ as follows: 
\begin{align}\label{def-h}
h(\mathcal{V}(t))=\frac{1}{|S(t)|}.
\end{align}

The following properties hold for the function $h$.
\begin{lem}
The function $h$ is a decreasing and log-convex function, satisfying the following differential identity:
\begin{align}\label{relationofh'h}
\frac{h'(\mathcal{V}(t))}{h(\mathcal{V}(t))}=-\frac{n-1}{|B(t)|}.
\end{align}
\begin{proof}
Since $\mathcal{V}(t)$ is an increasing function of $t$, while the spherical volume $|S(t)|=\omega_{n-1}\lambda(t)^{n-1}$ is an increasing function of $t$, the composite function $h(\mathcal{V}) = \frac{1}{|S(t(\mathcal{V}))|}$ is obviously decreasing with respect to $\mathcal{V}$. 

Next, taking the logarithm of both sides of \eqref{def-h} yields $\ln h(\mathcal{V}(t)) = -\ln |S(t)|$. Differentiating with respect to $t$ using the chain rule, we have:
\begin{align*}
    \frac{h'(\mathcal{V})}{h(\mathcal{V})} \cdot \mathcal{V}'(t) = -\frac{|S(t)|'}{|S(t)|}.
\end{align*}
By definition, $\mathcal{V}'(t)=|S(t)|\frac{\lambda'(t)g(t)}{\lambda(t)}$. Substituting this into the equation yields
\begin{align*}
    \frac{h'(\mathcal{V})}{h(\mathcal{V})}|S(t)|\frac{\lambda'(t)g(t)}{\lambda(t)}=-(n-1)\frac{\lambda'(t)}{\lambda(t)}.
\end{align*}
Canceling the common factor $\frac{\lambda'(t)}{\lambda(t)}$ and using $g(t)=\frac{|B(t)|}{|S(t)|}$, we obtain \eqref{relationofh'h}.

Finally, from the identity \eqref{relationofh'h}, we see that $(\ln h)'(\mathcal{V}) = -\frac{n-1}{|B(t(\mathcal{V}))|}$. As $\mathcal{V}$ increases, $t$ increases, and $|B(t)|$ increases monotonically. Therefore, $-\frac{n-1}{|B(t(\mathcal{V}))|}$ is increasing. This means $(\ln h)''(\mathcal{V}) > 0$, implying that $h$ is log-convex.
\end{proof}
\end{lem}

Using the properties of $h$, we have the following corollary:
\begin{cor}\label{cor-h-lowerbound}
For any bounded domain $\Omega \subset \mathbb{H}^n$, the auxiliary function $h$ satisfies
\begin{equation}
h'\left(\int_{\Omega}\frac{\lambda'g}{\lambda}dv\right)\geq -\frac{n-1}{|\Omega|}h\left(\int_{\Omega}\frac{\lambda'g}{\lambda}dv\right).
\end{equation}
\begin{proof}
By Proposition \ref{prop-l'g/l}, the function $\frac{\lambda'g}{\lambda}$ is increasing. The mass transplantation Theorem \ref{masstransplantation} yields
\begin{align}\label{eq-int-l'g/l}
\int_{\Omega}\frac{\lambda'g}{\lambda} dv\geq \int_{B(R)}\frac{\lambda'g}{\lambda} dv.
\end{align}
Furthermore, combining this with the log-convexity of $h$ and \eqref{relationofh'h}, we obtain
\begin{align*}
    \frac{h'(\int_{\Omega}\frac{\lambda'g}{\lambda}dv)}{h(\int_{\Omega}\frac{\lambda'g}{\lambda}dv)}\geq \frac{h'(\int_{B(R)}\frac{\lambda'g}{\lambda}dv)}{h(\int_{B(R)}\frac{\lambda'g}{\lambda}dv)}=-\frac{n-1}{|B(R)|}=-\frac{n-1}{|\Omega|}.
\end{align*}
\end{proof}
\end{cor}

Suppose that $\Omega$ is a smooth bounded domain in $\mathbb{H}^n$ with a star-shaped mean convex boundary $\Sigma=\partial\Omega$. Then $\Sigma$ can be evolved along the IMCF as described in Subsection \ref{subsec-imcf}. We now consider the following monotone quantity along the IMCF.

\begin{prop}
Along the IMCF, the quantity
\[
A(t):=|\Sigma_t|^{-1}\frac{\int_{\Sigma_t}g d\mu_t}{|\Omega_t|h(\int_{\Omega_t}\frac{\lambda'g}{\lambda} dv_t)}
\]
is decreasing, i.e., $A'(t)\leq 0$, and $A'(t)=0$ if and only if $\Sigma_t$ is a geodesic sphere centered at the origin.
\begin{proof}
Using the evolution equations \eqref{eq-dmut} and \eqref{eq-dvt}, we take the logarithmic derivative of $A(t)$ to simplify the calculation:
\begin{equation*}
\frac{A'(t)}{A(t)}=-1-\frac{\int_{\Sigma_t}\frac{1}{H}d\mu_t}{|\Omega_t|}+\frac{(\int_{\Sigma_t}g d\mu_t)'}{\int_{\Sigma_t}g d\mu_t}-\frac{(h(\int_{\Omega_t}\frac{\lambda'g}{\lambda}dv))'}{h(\int_{\Omega_t}\frac{\lambda'g}{\lambda}dv)}.
\end{equation*}

Next, we estimate the terms involving the derivatives of integrals. First, for the evolution of the boundary integral $\int_{\Sigma_t}g d\mu_t$, we have
\begin{equation*}
\left(\int_{\Sigma_t}g d\mu_t\right)'=\int_{\Sigma_t}\left(g+\frac{g'\la \partial_r,\nu\ra}{H}\right) d\mu_t \leq \int_{\Sigma_t} \left(g+\frac{g'}{H}\right) d\mu_t,
\end{equation*}
since $g' \ge 0$ by Proposition \ref{prop-g} and $\la \partial_r, \nu \ra \le 1$.

Second, for the evolution of the auxiliary function $h$, applying the chain rule and Corollary \ref{cor-h-lowerbound} yields
\begin{equation*}
h\left(\int_{\Omega_t}\frac{\lambda'g}{\lambda} dv\right)' \geq -\frac{n-1}{|\Omega_t|}h\left(\int_{\Omega_t}\frac{\lambda'g}{\lambda} dv\right)\cdot \int_{\Sigma_t}\frac{\lambda'g}{\lambda H}d\mu_t.
\end{equation*}

Substituting these inequalities back into the logarithmic derivative formula, noticing that the $-1$ cancels with the $g$ term in the numerator expansion, we get:
\begin{align*}
\frac{A'(t)}{A(t)}&\leq \frac{1}{\int_{\Sigma_t}g d\mu_t}\int_{\Sigma_t}\frac{g'}{H}d\mu_t-\frac{1}{|\Omega_t|}\int_{\Sigma_t}\frac{1-(n-1)\frac{\lambda'g}{\lambda}}{H} d\mu_t.
\end{align*}
Using the differential equation $g'=1-(n-1)\frac{\lambda'g}{\lambda}$ (see \eqref{eq-deri-g}), the numerator of the last term is exactly $g'$. Therefore:
\begin{equation*}
\frac{A'(t)}{A(t)}\leq \int_{\Sigma_t}\frac{g'}{H}d\mu_t\left(\frac{1}{\int_{\Sigma_t}gd\mu_t}-\frac{1}{|\Omega_t|}\right).
\end{equation*}
By Corollary \ref{cor-g}, $\int_{\Sigma_t}gd\mu_t\geq |\Omega_t|$. Since $g'\geq 0$ and $H>0$, this implies $A'(t) \le 0$. Equality holds if and only if $\int_{\Sigma_t}g d\mu_t= |\Omega_t|$, which is equivalent to $\Sigma_t$ being a geodesic sphere centered at the origin.
\end{proof}
\end{prop}

As a corollary, using the asymptotic behavior of IMCF and the Cauchy-Schwarz inequality, we have:
\begin{cor}\label{cor-int-g}
\begin{align}\label{eq-intg2-imcf}
\int_{\Sigma}g^2 d\mu\geq |\Sigma||\Omega|^2h\left(\int_{\Omega}\frac{\lambda'g}{\lambda}dv\right)^2.
\end{align}
\begin{proof}
Let $R_t$ be the radius such that $|B(R_t)| = |\Omega_t|$. By the asymptotic behavior of $\Sigma_t$ along IMCF proved by Gerhardt \cite{Ger11}, $\Sigma_t$ expands to infinity. Thus $\lim_{t\to\infty} \min_{\Sigma_t} r = \lim_{t\to\infty} R_t = +\infty$.

By the monotonicity of $h$ and \eqref{eq-int-l'g/l}, we have
\[\begin{aligned}
A(t)=&|\Sigma_t|^{-1}\frac{\int_{\Sigma_t}g d\mu_t}{|\Omega_t|h(\int_{\Omega_t}\frac{\lambda'g}{\lambda} dv_t)}\geq|\Sigma_t|^{-1}\frac{\int_{\Sigma_t}g d\mu_t}{|\Omega_t|h(\int_{B(R_t)}\frac{\lambda'g}{\lambda} dv_t)}\\
=&\frac{|S(R_t)|\int_{\Sigma_t}g d\mu_t}{|\Sigma_t||B(R_t)|}=\frac{1}{g(R_t)}\cdot \frac{\int_{\Sigma_t}gd\mu_t}{|\Sigma_t|}.
\end{aligned}\]
By the Mean Value Theorem, there exists $\xi_t\in [\min_{\Sigma_t}r,\max_{\Sigma_t}r]$ such that $\frac{1}{|\Sigma_t|}\int_{\Sigma_t}gd\mu_t=g(\xi_t)$. Since $A(t)$ is decreasing, taking the limit yields:
\begin{align*}
    \frac{\int_{\Sigma}g d\mu}{|\Omega||\Sigma|h(\int_{\Omega}\frac{\lambda'g}{\lambda} dv)}=A(0)\geq \liminf_{t\to\infty}A(t)\geq \liminf_{t\to\infty}\frac{g(\xi_t)}{g(R_t)}=\frac{n-1}{n-1}=1.
\end{align*}
Combining this with the Cauchy-Schwarz inequality $\int_{\Sigma}g^2 d\mu \geq \frac{1}{|\Sigma|}\left(\int_{\Sigma}g d\mu\right)^2$, the conclusion follows.
\end{proof}
\end{cor}

\subsection{Proof of Theorem \ref{main thm}}\label{subsection4.3}$\ $

The main goal of this section is to prove Theorem \ref{main thm}. To achieve this, we first establish the following crucial inequality for volume-preserving domains.

\begin{thm}\label{thm-core-isoperimetric}
Let $n\geq 3$, $\Omega\subset \mathbb{H}^n$ be a smooth bounded domain with a star-shaped mean convex boundary $\Sigma=\partial\Omega$. If $B(R)$ is a geodesic ball with the same volume as $\Omega$, i.e., $|B(R)|=|\Omega|$, then
\begin{align}\label{eq-prelem}
|\Sigma|\sigma_1(\Omega)\leq |S(R)|\sigma_1(B(R)),
\end{align}
and equality holds if and only if $\Omega$ is a geodesic ball.
\end{thm}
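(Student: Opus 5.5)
The plan is to chain the two integral estimates already in hand. The denominator will be controlled by Corollary \ref{cor-int-g}, the numerator by Proposition \ref{prop-upperbound-steklov}, and the resulting functional of $\Omega$ will then be compared with its value on $B(R)$ through a weighted mass transplantation. Write
\[
F(r):=(g')^2+(n-1)\frac{g^2}{\lambda^2},\qquad \Phi(r):=\frac{\lambda'(r)g(r)}{\lambda(r)},\qquad V_\Omega:=\int_\Omega \Phi\, dv,\qquad V_B:=\int_{B(R)}\Phi\, dv=\mathcal V(R).
\]
Combining \eqref{eq-upperbound-steklov} with \eqref{eq-intg2-imcf} gives
\[
|\Sigma|\sigma_1(\Omega)\leq \frac{\int_\Omega F\,dv}{|\Omega|^2 h(V_\Omega)^2}.
\]
On the ball side, $\operatorname{div}(gg'\partial_r)=F$ and \eqref{eq-sigmabr2} give $\int_{B(R)}F\,dv=g(R)g'(R)|S(R)|$. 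Together with $h(V_B)=1/|S(R)|$ and $|B(R)|=g(R)|S(R)|$, the target inequality \eqref{eq-prelem} then reduces to
\[
\frac{\int_\Omega F\,dv}{h(V_\Omega)^2}\leq \frac{\int_{B(R)} F\,dv}{h(V_B)^2}.
\]

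\textbf{Controlling the $h$-factor.} By \eqref{eq-int-l'g/l} we have $V_\Omega\ge V_B$. Since $h$ is log-convex, $(\ln h)'$ is increasing, so $(\ln h)'\ge (\ln h)'(V_B)=-(n-1)/|\Omega|$ on $[V_B,V_\Omega]$ by \eqref{relationofh'h}. Integrating gives
\[
-2\ln h(V_\Omega)\le -2\ln h(V_B)+\frac{2(n-1)}{|\Omega|}(V_\Omega-V_B).
\]
For the other factor, concavity of $\ln$ gives
\[
\ln\int_\Omega F\,dv\le \ln\int_{B(R)}F\,dv+\frac{\int_\Omega F-\int_{B(R)}F}{\int_{B(R)}F}.
\]
It therefore suffices to prove $\int_\Omega G\,dv\le\int_{B(R)}G\,dv$ for the single radial weight
\[
G(r):=\frac{F(r)}{\int_{B(R)}F\,dv}-\frac{2(n-1)}{|B(R)|}\Phi(r),
\]
with $R$ fixed.

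\textbf{Weighted mass transplantation.} Looking back at the proof of Theorem \ref{masstransplantation}, global monotonicity of $G$ is not needed. It suffices that $G(r)\ge G(R)$ for $r<R$ and $G(r)\le G(R)$ for $r>R$, with strict inequalities off a null set to obtain the equality case. Since $\Phi$ is increasing (Proposition \ref{prop-l'g/l}), the second term of $G$ is decreasing. For $n\ge5$ I expect $F$ itself to be decreasing, which should be checkable from $g''\le0$, Proposition \ref{prop-ll'g'/g} and $g'\to0$; then $G$ is decreasing and the classical Theorem \ref{masstransplantation} applies directly.

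\textbf{Main obstacle: $n=3,4$.} For $n=3,4$, $F$ need not be monotone; for example, when $n=3$ the limit of $\lambda^2 g'$ degenerates. Here I would prove the single-crossing property of $G(r)-G(R)$ at $r=R$ directly. The first step is to rewrite $F$ using $g'=1-(n-1)\lambda'g/\lambda$, so that $F$ becomes an explicit function of $\Phi$ and $\lambda$. The next step is to use the normalization $\int_{B(R)}F=gg'|S|(R)$, so that $G(r)-G(R)$ becomes a difference in which the possibly increasing part of $F$ is dominated by the decreasing term $\frac{2(n-1)}{|B(R)|}\Phi$. Here it matters that $|B(R)|$ and $g(R)g'(R)|S(R)|$ are tied to the same $R$. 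The bounds $\frac1n\le\Phi\le\frac1{n-1}$ and the upper estimates for $g$ from Proposition \ref{prop-g} should then close the inequality. This sign analysis is the step I expect to be hardest.

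\textbf{Equality case.} Equality in Corollary \ref{cor-int-g} (through the rigidity in Corollary \ref{cor-g} and in the monotone quantity $A(t)$), or strictness in the transplantation step, forces $\Omega=B(R)$ up to the choice of center of mass.
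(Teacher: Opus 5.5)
Your reduction up to the linearization is correct and is equivalent to the paper's; the paper applies the mean value theorem to $h^2$ instead of taking logarithms, but reaches the same sufficient condition. However, adding your two bounds, what you actually need is
\[
\frac{\int_\Omega F-\int_{B(R)}F}{\int_{B(R)}F}+\frac{2(n-1)}{|B(R)|}(V_\Omega-V_B)\le 0,
\]
that is, after multiplying by $\int_{B(R)}F=g'(R)|B(R)|$, $\int_\Omega\tilde G\,dv\le\int_{B(R)}\tilde G\,dv$ with $\tilde G:=F+2(n-1)g'(R)\Phi$. Your weight $G$ has a minus sign in front of $\Phi$, which contradicts the term $+\frac{2(n-1)}{|\Omega|}(V_\Omega-V_B)$ you derived one line earlier.

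This is not cosmetic. With your sign the claim would follow at once from $\int_\Omega F\le\int_{B(R)}F$ and $V_\Omega\ge V_B$, but it is too weak to give \eqref{eq-prelem}. With the correct sign the $\Phi$-term is increasing (Proposition \ref{prop-l'g/l}) and works against the transplantation. So your $n\ge5$ step (``$F$ decreasing, hence $G$ decreasing'') fails. In fact $F'=-\frac{2(n-1)}{\lambda\lambda'}\bigl[(g'-\frac{\lambda'g}{\lambda})^2+\lambda^2(g')^2\bigr]<0$ in every dimension, so $F$ is monotone also for $n=3,4$; if that sufficed there would be no dimensional restriction at all. Likewise your heuristic for $n=3,4$ (the increasing part of $F$ dominated by a decreasing $\Phi$-term) has the roles reversed. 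The real task is to show that the decrease of $F$ beats the increase of $2(n-1)g'(R)\Phi$.

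That quantitative domination is the whole content of the paper's argument, and your proposal does not supply it. For $n\ge5$ the paper proves that $F+\frac{2(n-1)}{n}\frac{\lambda'g}{\lambda}$ is decreasing. This reduces to $\frac{\lambda\lambda'g'}{g}\le\frac{2n-1}{n-1}$, which follows from $\frac{\lambda\lambda'g'}{g}\le\frac{n-1}{n-3}$ (Proposition \ref{prop-ll'g'/g}) when $n\ge5$. Then $g'(R)\le\frac1n$ and $V_\Omega\ge V_B$ lower the coefficient to $2(n-1)g'(R)$.

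For $n=3,4$ the paper sets $W:=F/g'\ge0$ and $\Psi:=W+2(n-1)\frac{\lambda'g}{\lambda}$, so that
\[
\tilde G(r)-\tilde G(R)=\bigl(g'(r)-g'(R)\bigr)W(r)+g'(R)\bigl(\Psi(r)-\Psi(R)\bigr).
\]
It then shows by explicit computation that $\Psi$ is strictly decreasing for $n=3,4$. Since $g'$ is decreasing, this is precisely the single-crossing property at $r=R$ you were aiming for; it is the paper's weighted mass transplantation. So your framework is the right one. The missing ingredient is the monotonicity of $\Psi$, or an equivalent estimate, and that is where the difficulty lies.
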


Assuming Theorem \ref{thm-core-isoperimetric} holds, we can readily prove Theorem \ref{main thm}.
\begin{proof}[Proof of Theorem \ref{main thm}]
Let $R^{\ast}$ be the radius of the geodesic ball $\Omega^{\ast}$ that has the same surface area as $\Omega$, i.e., $|S(R^{\ast})|=|\partial\Omega|$. Since $R$ is the radius of the geodesic ball with the same volume as $\Omega$, the classical isoperimetric inequality in hyperbolic space (Theorem \ref{thm-isoperimetric inequality}) implies
\begin{equation}
|S(R)|\leq |\partial\Omega|=|S(R^{\ast})|.
\end{equation}
Since the surface area function $|S(r)|$ is strictly increasing, we deduce $R\leq R^{\ast}$. By Lemma \ref{lem-ln-1g'/g}, the geometric quantity $|S(r)|\sigma_1(B(r))$ is an increasing function of the radius $r$. Therefore,
\begin{equation}
|S(R)|\sigma_1(B(R))\leq |S(R^{\ast})|\sigma_1(B(R^{\ast}))=|\partial\Omega|\sigma_1(\Omega^{\ast}).
\end{equation}
Substituting the conclusion of Theorem \ref{thm-core-isoperimetric} into the left-hand side, we obtain $\sigma_1(\Omega)\leq \sigma_1(\Omega^{\ast})$. The equality holds if and only if $R = R^{\ast}$, which implies $\Omega$ is a geodesic ball.
\end{proof}

We proceed to prove Theorem \ref{thm-core-isoperimetric} by separating it into two cases: higher dimensions ($n \geq 5$) and lower dimensions ($n = 3, 4$).

\subsubsection{The case $n \geq 5$}

When $n \geq 5$, we introduce a specific function and verify its monotonicity:
\begin{lem}\label{lem-n=5}
If $n\geq 5$, then the function
\[
\Phi(r):=(g')^2+(n-1)\frac{g^2}{\lambda^2}+\frac{2(n-1)}{n}\frac{\lambda'g}{\lambda}
\]
is decreasing in $r$. 
\begin{proof}
By taking a derivative and using $g''=(n-1)(\frac{g}{\lambda^2}-\frac{\lambda'g'}{\lambda})$ along with the identity $g'-\frac{\lambda'g}{\lambda}=\frac{n}{n-1}(g'-\frac{1}{n})$, we have
\[\begin{aligned}
\Phi'(r)
=&\left((g')^2+(n-1)\frac{g^2}{\lambda^2}+\frac{2}{n}(1-g')\right)'\\
=&2g''\left(g'-\frac{1}{n}\right)+2(n-1)\frac{g}{\lambda}\frac{g'-\frac{\lambda'g}{\lambda}}{\lambda}\\
=&2(n-1)\frac{g}{\lambda^2}\left(g'-\frac{1}{n}\right)\left(\frac{2n-1}{n-1}-\frac{\lambda\lambda'g'}{g}\right).
\end{aligned}\]
By Proposition \ref{prop-l'g/l}, $g'\leq \frac{1}{n}$, so the term $(g'-\frac{1}{n})$ is non-positive. To prove $\Phi'(r)\leq 0$, it suffices to show that the last term is non-negative, which is equivalent to
\begin{align}\label{eq-deccondi}
\frac{\lambda\lambda'g'}{g}\leq \frac{2n-1}{n-1}.
\end{align}
By Proposition \ref{prop-ll'g'/g}, we know $\frac{\lambda\lambda'g'}{g}\leq \frac{n-1}{n-3}$. Simple algebraic comparison shows that when $n\geq 5$, $\frac{n-1}{n-3} \leq \frac{2n-1}{n-1}$. Thus, the lemma holds.
\end{proof}
\end{lem}

From Lemma \ref{lem-n=5}, by the mass transplantation argument (Theorem \ref{masstransplantation}), we have $\int_{\Omega}\Phi(r)dv\leq \int_{B(R)}\Phi(r)dv$. Expanding and rearranging terms yields:
\begin{align}\label{eq-afterdeccondi}
    \frac{2(n-1)}{n}\left(\int_{\Omega}\frac{\lambda'g}{\lambda}dv-\int_{B(R)}\frac{\lambda'g}{\lambda}dv\right)\leq g'(R)|B(R)|-\int_{\Omega}\left((g')^2+(n-1)\frac{g^2}{\lambda^2}\right)dv.
\end{align}

Using this inequality, we can obtain a fine lower bound for $h^2$:
\begin{lem}\label{lem-h2-bound}
Under the above assumptions, we have:
\begin{equation}
h\left(\int_{\Omega}\frac{\lambda'g}{\lambda}dv\right)^2\geq \frac{1}{g'(R)|B(R)||S(R)|^2}\int_{\Omega}\left((g')^2+(n-1)\frac{g^2}{\lambda^2}\right)dv.
\end{equation}
\begin{proof}
By the Mean Value Theorem, there exists $\xi\in [\int_{B(R)}\frac{\lambda'g}{\lambda}dv,\int_{\Omega}\frac{\lambda'g}{\lambda}dv]$ such that
\[\begin{aligned}
h\left(\int_{\Omega}\frac{\lambda'g}{\lambda}dv\right)^2-h\left(\int_{B(R)}\frac{\lambda'g}{\lambda}dv\right)^2=2h(\xi)h'(\xi)\left(\int_{\Omega}\frac{\lambda'g}{\lambda}dv-\int_{B(R)}\frac{\lambda'g}{\lambda}dv\right).
\end{aligned}\]
Recall that the function $h(\int_{B(r)}\frac{\lambda'g}{\lambda}dv)h'(\int_{B(r)}\frac{\lambda'g}{\lambda}dv)=-\frac{n-1}{|B(r)||S(r)|^2}$ is increasing in $r$. Since the equivalent radius for $\xi$ is at least $R$, we have $h(\xi)h'(\xi) \geq -\frac{n-1}{|B(R)||S(R)|^2}$. 

Combining this with $g'(R)\leq \frac{1}{n}$ and \eqref{eq-afterdeccondi}, we derive
\begin{align*}
    &h\left(\int_{\Omega}\frac{\lambda'g}{\lambda}dv\right)^2\\
    \geq & \frac{1}{|S(R)|^2}-\frac{2(n-1)}{|B(R)||S(R)|^2}\left(\int_{\Omega}\frac{\lambda'g}{\lambda}dv-\int_{B(R)}\frac{\lambda'g}{\lambda}dv\right)\\
    \geq & \frac{1}{|S(R)|^2}-\frac{1}{g'(R)|B(R)||S(R)|^2}\frac{2(n-1)}{n}\left(\int_{\Omega}\frac{\lambda'g}{\lambda}dv-\int_{B(R)}\frac{\lambda'g}{\lambda}dv\right)\\
    \geq & \frac{1}{|S(R)|^2}-\frac{1}{g'(R)|B(R)||S(R)|^2}\left(g'(R)|B(R)|-\int_{\Omega}\left((g')^2+(n-1)\frac{g^2}{\lambda^2}\right)dv\right)\\
    = & \frac{1}{g'(R)|B(R)||S(R)|^2}\int_{\Omega}\left((g')^2+(n-1)\frac{g^2}{\lambda^2}\right)dv.
\end{align*}
\end{proof}
\end{lem}

\begin{proof}[Proof of Theorem \ref{thm-core-isoperimetric} for $n\geq 5$]
According to Proposition \ref{prop-upperbound-steklov}, we have the variational upper bound $\sigma_1(\Omega) \leq \frac{\int_{\Omega}((g')^2+(n-1)\frac{g^2}{\lambda^2})dv}{\int_{\partial\Omega}g^2d\mu}$. Substituting the boundary integral lower bound from Corollary \ref{cor-int-g} into the denominator, we get
\begin{equation}
|\partial\Omega|\sigma_1(\Omega) \leq \frac{\int_{\Omega}\left((g')^2+(n-1)\frac{g^2}{\lambda^2}\right)dv}{|\Omega|^2 h\left(\int_{\Omega}\frac{\lambda'g}{\lambda}dv\right)^2}.
\end{equation}
By substituting the lower bound for $h^2$ from Lemma \ref{lem-h2-bound}, the integral terms cancel out:
\begin{equation}
|\partial\Omega|\sigma_1(\Omega) \leq \frac{g'(R)|B(R)||S(R)|^2}{|\Omega|^2}.
\end{equation}
Using the equal volume condition $|\Omega|=|B(R)|$ and $\sigma_1(B(R))=\frac{g'(R)}{g(R)}$, this simplifies to $|S(R)|\sigma_1(B(R))$.
\end{proof}

\subsubsection{The case $n = 3, 4$}

In lower dimensions ($n = 3, 4$), the function $\Phi(r)$ from Lemma \ref{lem-n=5} is no longer globally monotonic. To overcome this difficulty, we introduce a Weighted Mass Transplantation method.

In lower dimensions ($n = 3, 4$), the function $\Phi(r)$ from Lemma \ref{lem-n=5} is no longer globally monotonic. To overcome this difficulty, we introduce a weighted variant of the classical mass transplantation method. Our weighted version shares the same spirit as the proof of the Bathtub principle (see, e.g., Lieb and Loss \cite[Theorem 1.14]{LL01}).

\begin{thm}[Weighted Mass Transplantation]\label{thm-weighted-mt}
Let $\Omega \subset \mathbb{H}^n$ be a bounded measurable set, and $B(R)$ be a geodesic ball centered at the origin. If $f:[0,\infty)\to \mathbb{R}$ is a decreasing integrable function, and $F(x)$ is an arbitrary non-negative integrable weight function, then we have:
\begin{equation}
\int_{\Omega}f(r(x))F(x)dv-\int_{B(R)}f(r(x))F(x)dv\leq f(R)\left(\int_{\Omega}F(x)dv-\int_{B(R)}F(x)dv\right).
\end{equation}
where $r(x)$ denotes the geodesic distance from $x$ to the origin. If $f$ is strictly decreasing and $F(x)>0$ almost everywhere, equality holds if and only if $\Omega=B(R)$.
\begin{proof}
We introduce the auxiliary function 
\[
J(x) := F(x)\left(f(r(x))-f(R)\right). 
\]
The desired inequality is equivalent to $\int_{\Omega}J(x)dv-\int_{B(R)}J(x)dv\leq 0$.
By the additivity of integrals, the difference can be decomposed as:
\begin{equation*}
\int_{\Omega}J(x)dv - \int_{B(R)}J(x)dv = \int_{\Omega\setminus B(R)}J(x)dv - \int_{B(R)\setminus \Omega}J(x)dv.
\end{equation*}
In the exterior region $x \in \Omega\setminus B(R)$, $r(x) \geq R$. Since $f$ is decreasing, $f(r(x)) \leq f(R)$. Because $F(x) \geq 0$, we have $J(x) \leq 0$.

Conversely, in the interior region $x \in B(R)\setminus \Omega$, $r(x) \leq R$, yielding $f(r(x)) \geq f(R)$ and thus $J(x) \geq 0$.

Combining these facts, the integral difference satisfies 
\[
\int_{\Omega\setminus B(R)}Jdv - \int_{B(R)\setminus \Omega}Jdv \leq 0 - 0 = 0.
\]
\end{proof}
\end{thm}

Next, we establish a new monotonicity lemma specifically tailored for lower dimensions:
\begin{lem}\label{lem-n=3-4}
If $n=3, 4$, the function
\begin{equation}
\Psi(r):=g'(r)+(n-1)\frac{g^2(r)}{\lambda^2(r)g'(r)}+2(n-1)\frac{\lambda'(r)g(r)}{\lambda(r)}
\end{equation}
is strictly decreasing in $r$.
\begin{proof}
We write $\Psi(r) = 1 + (n-1)\left(\frac{g^2(r)}{\lambda^2(r)g'(r)} + \frac{\lambda'(r)g(r)}{\lambda(r)}\right)$ and analyze the dimensions separately.

\textbf{Case $n=3$:} We compute 
\[
g(r) = \frac{\lambda(r)\lambda'(r) - r}{2\lambda^2(r)}. 
\]
Differentiating and using $\lambda''(r)=\lambda(r)$ gives 
\[
g'(r) = \frac{r\lambda'(r) - \lambda(r)}{\lambda^3(r)}. 
\]
Substituting these into $\Psi(r)$ and bringing them over a common denominator $2\lambda^3(r\lambda' - \lambda)$, we obtain the rational expression:
\begin{equation}
\Psi(r) = \frac{4r\lambda\lambda'^3 - 3\lambda^2\lambda'^2 - 2r^2\lambda'^2 - 2r\lambda\lambda' + 2\lambda^2 + r^2}{2\lambda^3(r\lambda' - \lambda)} := \frac{U(r)}{V(r)}.
\end{equation}
Taking the derivative using the quotient rule, 
\[
\Psi'(r) = \frac{U'(r)V(r) - U(r)V'(r)}{V^2(r)}. 
\]
We repeatedly apply $\lambda''(r) = \lambda(r)$ and $\lambda'^2(r) = \lambda^2(r) + 1$ to eliminate second derivatives and reduce the degree of $\lambda'$, we obtain 
\[
U'V - UV' = 2\lambda^2(r)M(r),
\]
where
\begin{equation*}
M(r) = -\lambda^5\lambda' + (11r+4r^3)\lambda^4 - (3+10r^2)\lambda^3\lambda' + (9r+6r^3)\lambda^2 - 9r^2\lambda\lambda' + 3r^3.
\end{equation*}
Let $N(r) = \frac{M(r)}{\lambda^6(r)}$. Differentiating $N(r)$ and simplifying yields:
\begin{equation*}
N'(r) = -\frac{2}{\lambda^7(r)} \big(r\lambda'(r) - \lambda(r)\big) \Big( r\big(2 + \lambda^2(r) + \lambda'^2(r)\big) - 3\lambda(r)\lambda'(r) \Big)^2.
\end{equation*}
Since $r\lambda'(r) - \lambda(r) > 0$ for $r>0$, we have $N'(r) \le 0$, meaning $N(r)$ is strictly decreasing on $(0, \infty)$. 

To find the initial value at the origin, we expand $M(r)$ using Taylor series up to $O(r^7)$. The lower-order terms identically cancel out (the coefficients of $r^3$ and $r^5$ sum to zero), so $M(r) = O(r^7)$. Thus $\lim_{r \to 0^+} N(r) = 0$. 

Since $N(r)$ starts from $0$ and is strictly decreasing, $N(r) < 0$ for all $r>0$. Because 
\[
\Psi'(r) = \frac{\lambda^2(r) N(r)}{2\big(r\lambda'(r) - \lambda(r)\big)^2}, 
\]
we conclude that $\Psi'(r) < 0$. 

\textbf{Case $n=4$:} Here $g(r) = \frac{(\lambda'(r)-1)^2(\lambda'(r)+2)}{3\lambda^3(r)}$ and $g'(r) = \frac{1}{(\lambda'(r)+1)^2}$. Thus,
\begin{equation*}
\Psi(r) = \frac{5}{2} - \frac{1}{6}\left(1 - \frac{2}{\lambda'(r)+1}\right)^2.
\end{equation*}
Since $\lambda'(r) \ge 1$ and is strictly increasing, the squared term $\left( 1 - \frac{2}{\lambda'(r)+1} \right)^2$ is strictly increasing. Combined with the negative sign, $\Psi(r)$ is strictly decreasing.
\end{proof}
\end{lem}

Now we apply the weighted mass transplantation theorem \ref{thm-weighted-mt}. We choose the decreasing function $f(r)=g'(r)$ and the non-negative weight function $F(r)=g'(r)+(n-1)\frac{g^2(r)}{\lambda^2(r)g'(r)}$. Notice that $F(r)=\Psi(r)-2(n-1)\frac{\lambda'(r)g(r)}{\lambda(r)}$.
Applying Theorem \ref{thm-weighted-mt} yields:
\begin{equation}\label{eq-n=3-4.7''}
\begin{aligned}
&\int_{\Omega}\left((g')^2+(n-1)\frac{g^2}{\lambda^2}\right)dv-\int_{B(R)}\left((g')^2+(n-1)\frac{g^2}{\lambda^2}\right)dv\\
=&\int_{\Omega}g'(r)F(r)dv-\int_{B(R)}g'(r)F(r)dv\\
\leq & g'(R)\left(\int_{\Omega}\Psi dv-\int_{B(R)}\Psi dv\right)-2(n-1)g'(R)\left(\int_{\Omega}\frac{\lambda'g}{\lambda}dv-\int_{B(R)}\frac{\lambda'g}{\lambda}dv\right)\\
\leq & -2(n-1)g'(R)\left(\int_{\Omega}\frac{\lambda'g}{\lambda}dv-\int_{B(R)}\frac{\lambda'g}{\lambda}dv\right),
\end{aligned}
\end{equation}
where the last step uses the fact that $\int_{\Omega}\Psi dv \leq \int_{B(R)}\Psi dv$ due to the standard mass transplantation on the decreasing function $\Psi(r)$.

\begin{proof}[Proof of Theorem \ref{thm-core-isoperimetric} for $n=3,4$]
Recall the lower bound for $h^2$ established in Lemma \ref{lem-h2-bound} (which relies only on the Mean Value Theorem and the properties of $h$)
\begin{equation*}
h\left(\int_{\Omega}\frac{\lambda'g}{\lambda}dv\right)^2 \geq \frac{1}{|S(R)|^2}-\frac{2(n-1)}{|B(R)||S(R)|^2}\left(\int_{\Omega}\frac{\lambda'g}{\lambda}dv-\int_{B(R)}\frac{\lambda'g}{\lambda}dv\right).
\end{equation*}
Substituting inequality \eqref{eq-n=3-4.7''} into this lower bound, and using the identity on the ball $\int_{B(R)}((g')^2+(n-1)\frac{g^2}{\lambda^2})dv=g'(R)|B(R)|$, we obtain
\begin{equation*}\begin{aligned}
&h\left(\int_{\Omega}\frac{\lambda'g}{\lambda}dv\right)^2\\
\geq& \frac{1}{|S(R)|^2}-\frac{1}{g'(R)|B(R)||S(R)|^2}\left(g'(R)|B(R)|-\int_{\Omega}\left((g')^2+(n-1)\frac{g^2}{\lambda^2}\right)dv\right)\\
= & \frac{1}{g'(R)|B(R)||S(R)|^2}\int_{\Omega}\left((g')^2+(n-1)\frac{g^2}{\lambda^2}\right)dv.
\end{aligned}\end{equation*}
This establishes the exact same lower bound for $h^2$ as Lemma \ref{lem-h2-bound} does for $n \ge 5$. The subsequent steps bounding $\sigma_1$ are identical to the $n \ge 5$ case.
\end{proof}

\subsection{Proof of Corollary \ref{main cor}}$\ $

Let $B(R)$ be a geodesic ball with the radial part of the first non-zero Steklov eigenfunction denoted as $g(r)$. Following the construction method in \cite{Ver21}, by using the Borsuk-Ulam theorem, we can choose an appropriate origin $p \in \Omega$ and rotate the normal coordinate system $(x_1, \dots, x_n)$ such that the test functions satisfy the orthogonality conditions against the first $n-1$ non-zero eigenfunctions $u_1, u_2, \dots, u_{n-1}$ on the boundary:
\begin{equation}
\int_{\partial\Omega} g(r)\frac{x_i}{r} u_j d\mu = 0, \qquad 0 \le j \le i-1, \quad 1 \le i \le n.
\end{equation}

According to the variational characterization \eqref{def-steklov}, substituting the test functions $g(r)\frac{x_i}{r}$ into the Rayleigh quotient gives
\begin{equation}
\sigma_i(\Omega)\int_{\partial\Omega}\left(g(r)\frac{x_i}{r}\right)^2 d\mu \le \int_{\Omega}\left\| \bar\nabla \left(g(r)\frac{x_i}{r}\right) \right\|^2 dv.
\end{equation}
Dividing both sides by $\sigma_i(\Omega)$, summing from $i=1$ to $n$, and using the identity $\sum_{i=1}^n \frac{x_i^2}{r^2} = 1$, we get
\begin{equation}\label{eq-weinstock-sum1}
\int_{\partial\Omega}g^2(r)d\mu \le \sum_{i=1}^n \frac{1}{\sigma_i(\Omega)}\int_{\Omega}\frac{g^2(r)}{\lambda^2(r)}dv + \sum_{i=1}^n \frac{1}{\sigma_i(\Omega)}\int_{\Omega}\left( (g'(r))^2 - \frac{g^2(r)}{\lambda^2(r)} \right)\frac{x_i^2}{r^2}dv.
\end{equation}

We can reorganize the second term on the right side of \eqref{eq-weinstock-sum1} to isolate the first $n-1$ terms
\begin{equation}
\sum_{i=1}^n \frac{1}{\sigma_i(\Omega)}\frac{x_i^2}{r^2} = \sum_{i=1}^{n-1}\left( \frac{1}{\sigma_i(\Omega)} - \frac{1}{\sigma_n(\Omega)} \right)\frac{x_i^2}{r^2} + \frac{1}{\sigma_n(\Omega)}.
\end{equation}
Due to the properties of radial functions in hyperbolic space, $(g'(r))^2 - \frac{g^2(r)}{\lambda^2(r)} \le 0$. Since $\frac{1}{\sigma_i(\Omega)} - \frac{1}{\sigma_n(\Omega)} \ge 0$ for $1 \le i \le n-1$, it implies
\begin{equation}
\sum_{i=1}^{n-1}\left(\frac{1}{\sigma_i(\Omega)} - \frac{1}{\sigma_n(\Omega)}\right)\int_{\Omega}\left( (g'(r))^2 - \frac{g^2(r)}{\lambda^2(r)} \right)\frac{x_i^2}{r^2}dv \le 0.
\end{equation}

Substituting this back into \eqref{eq-weinstock-sum1}, only the part with $\frac{1}{\sigma_n(\Omega)}$ remains in the second integral term, leading to cancellation:
\begin{equation}\label{eq-weinstock-cancel}
\begin{aligned}
&\int_{\partial\Omega}g^2(r)d\mu \\
\le& \left(\sum_{i=1}^{n-1}\frac{1}{\sigma_i(\Omega)} + \frac{1}{\sigma_n(\Omega)}\right) \int_{\Omega} \frac{g^2(r)}{\lambda^2(r)} dv + \frac{1}{\sigma_n(\Omega)} \int_{\Omega} \left( (g'(r))^2 - \frac{g^2(r)}{\lambda^2(r)} \right) dv \\
=& \sum_{i=1}^{n-1}\frac{1}{\sigma_i(\Omega)} \int_{\Omega} \frac{g^2(r)}{\lambda^2(r)} dv + \frac{1}{\sigma_n(\Omega)} \int_{\Omega} (g'(r))^2 dv.
\end{aligned}
\end{equation}
By the ordering of eigenvalues, $\frac{1}{\sigma_n(\Omega)} \le \frac{1}{n-1}\sum_{i=1}^{n-1}\frac{1}{\sigma_i(\Omega)}$. Substituting this into \eqref{eq-weinstock-cancel} and factoring out $\frac{1}{n-1}\sum_{i=1}^{n-1}\frac{1}{\sigma_i(\Omega)}$, we obtain
\begin{equation}\label{eq-weinstock-sum2}
\int_{\partial\Omega}g^2(r)d\mu \le \frac{1}{n-1}\sum_{i=1}^{n-1}\frac{1}{\sigma_i(\Omega)} \int_{\Omega} \left( (n-1)\frac{g^2(r)}{\lambda^2(r)} + (g'(r))^2 \right) dv.
\end{equation}

Since $\Omega$ is star-shaped and mean convex, and $\Omega^{\ast}$ is a geodesic ball with the same surface area, our previously established estimate (combining Corollary \ref{cor-int-g} and Lemma \ref{lem-h2-bound}) yields
\begin{equation}
\int_{\partial\Omega}g^2(r)d\mu\geq \frac{1}{\sigma_1(\Omega^{\ast})}\int_{\Omega}\left( (n-1)\frac{g^2(r)}{\lambda^2(r)} + (g'(r))^2 \right) dv.
\end{equation}
Substituting this lower bound into \eqref{eq-weinstock-sum2}, we get
\begin{equation}
\frac{1}{\sigma_1(\Omega^{\ast})}\leq \frac{1}{n-1}\sum_{i=1}^{n-1}\frac{1}{\sigma_i(\Omega)}.
\end{equation}
Note that on the geodesic ball $\Omega^{\ast}$, the first $n-1$ non-zero Steklov eigenvalues are repeated roots, meaning $\sigma_1(\Omega^{\ast}) = \sigma_i(\Omega^{\ast})$ for $1 \le i \le n-1$. Substituting this yields:
\begin{equation}
\sum_{i=1}^{n-1}\frac{1}{\sigma_i(\Omega^{\ast})} \le \sum_{i=1}^{n-1}\frac{1}{\sigma_i(\Omega)}.
\end{equation}
The condition for equality is identical to the one established in our previous integral estimates, which holds if and only if $\Omega$ is a geodesic ball. This completes the proof of Corollary \ref{main cor}. \qed


\begin{bibdiv}
\begin{biblist}

\bib{Ahl50}{article}{
   author={Ahlfors, Lars V.},
   title={Open Riemann surfaces and extremal problems on compact subregions},
   journal={Comment. Math. Helv.},
   volume={24},
   date={1950},
   pages={100--134},
}

\bib{AS96}{article}{
   author={Aithal, A. R.},
   author={Santhanam, G.},
   title={Sharp upper bound for the first non-zero Neumann eigenvalue for bounded domains in rank-$1$ symmetric spaces},
   journal={Trans. Amer. Math. Soc.},
   volume={348},
   date={1996},
   number={10},
   pages={3955--3965},
}

\bib{AV22}{article}{
   author={Anoop, T. V.},
   author={Verma, Sheela},
   title={Szeg\"{o}-Weinberger type inequalities for symmetric domains in simply connected space forms},
   journal={J. Math. Anal. Appl.},
   volume={515},
   date={2022},
   number={2},
   pages={Paper No. 126429, 16},
}

\bib{Ber05}{article}{
   author={Bernstein, Felix},
   title={\"{U}ber die isoperimetrische Eigenschaft des Kreises auf der Kugeloberfl\"{a}che und in der Ebene},
   journal={Math. Ann.},
   volume={60},
   date={1905},
   number={1},
   pages={117--136},
}

\bib{BS14}{article}{
   author={Binoy},
   author={Santhanam, G.},
   title={Sharp upperbound and a comparison theorem for the first nonzero Steklov eigenvalue},
   journal={J. Ramanujan Math. Soc.},
   volume={29},
   date={2014},
   number={2},
   pages={133--154},
}

\bib{BDR12}{article}{
   author={Brasco, Lorenzo},
   author={De Philippis, Guido},
   author={Ruffini, Berardo},
   title={Spectral optimization for the Stekloff-Laplacian: the stability issue},
   journal={J. Funct. Anal.},
   volume={262},
   date={2012},
   number={11},
   pages={4675--4710},
}

\bib{Bre13}{article}{
   author={Brendle, Simon},
   title={Constant mean curvature surfaces in warped product manifolds},
   journal={Publ. Math. Inst. Hautes \'{E}tudes Sci.},
   volume={117},
   date={2013},
   pages={247--269},
}

\bib{Bro01}{article}{
   author={Brock, F.},
   title={An isoperimetric inequality for eigenvalues of the Stekloff problem},
   journal={ZAMM Z. Angew. Math. Mech.},
   volume={81},
   date={2001},
   number={1},
   pages={69--71},
}

\bib{BFNT21}{article}{
   author={Bucur, Dorin},
   author={Ferone, Vincenzo},
   author={Nitsch, Carlo},
   author={Trombetti, Cristina},
   title={Weinstock inequality in higher dimensions},
   journal={J. Differential Geom.},
   volume={118},
   date={2021},
   number={1},
   pages={1--21},
}

\bib{CR19}{article}{
   author={Castillon, Philippe},
   author={Ruffini, Berardo},
   title={A spectral characterization of geodesic balls in non-compact rank one symmetric spaces},
   journal={Ann. Sc. Norm. Super. Pisa Cl. Sci. (5)},
   volume={19},
   date={2019},
   number={4},
   pages={1359--1388},
}

\bib{CEG11}{article}{
   author={Colbois, Bruno},
   author={El Soufi, Ahmad},
   author={Girouard, Alexandre},
   title={Isoperimetric control of the Steklov spectrum},
   journal={J. Funct. Anal.},
   volume={261},
   date={2011},
   number={5},
   pages={1384--1399},
}

\bib{CEG19}{article}{
   author={Colbois, Bruno},
   author={El Soufi, Ahmad},
   author={Girouard, Alexandre},
   title={Compact manifolds with fixed boundary and large Steklov eigenvalues},
   journal={Proc. Amer. Math. Soc.},
   volume={147},
   date={2019},
   number={9},
   pages={3813--3827},
}

\bib{CGGS24}{article}{
   author={Colbois, Bruno},
   author={Girouard, Alexandre},
   author={Gordon, Carolyn},
   author={Sher, David},
   title={Some recent developments on the Steklov eigenvalue problem},
   journal={Rev. Mat. Complut.},
   volume={37},
   date={2024},
   number={1},
   pages={1--161},
}

\bib{Esc99}{article}{
   author={Escobar, Jos\'{e} F.},
   title={An isoperimetric inequality and the first Steklov eigenvalue},
   journal={J. Funct. Anal.},
   volume={165},
   date={1999},
   number={1},
   pages={101--116},
}

\bib{FS11}{article}{
   author={Fraser, Ailana},
   author={Schoen, Richard},
   title={The first Steklov eigenvalue, conformal geometry, and minimal surfaces},
   journal={Adv. Math.},
   volume={226},
   date={2011},
   number={5},
   pages={4011--4030},
}

\bib{FS16}{article}{
   author={Fraser, Ailana},
   author={Schoen, Richard},
   title={Sharp eigenvalue bounds and minimal surfaces in the ball},
   journal={Invent. Math.},
   volume={203},
   date={2016},
   number={3},
   pages={823--890},
}

\bib{FS19}{article}{
   author={Fraser, Ailana},
   author={Schoen, Richard},
   title={Shape optimization for the Steklov problem in higher dimensions},
   journal={Adv. Math.},
   volume={348},
   date={2019},
   pages={146--162},
}

\bib{FS20}{article}{
   author={Fraser, Ailana},
   author={Schoen, Richard},
   title={Some results on higher eigenvalue optimization},
   journal={Calc. Var. Partial Differential Equations},
   volume={59},
   date={2020},
   number={5},
   pages={Paper No. 151, 22},
}

\bib{FL21}{article}{
   author={Freitas, Pedro},
   author={Laugesen, Richard S.},
   title={From Neumann to Steklov and beyond, via Robin: the Weinberger way},
   journal={Amer. J. Math.},
   volume={143},
   date={2021},
   number={3},
   pages={969--994},
}

\bib{Gab06}{article}{
   author={Gabard, Alexandre},
   title={Sur la repr\'{e}sentation conforme des surfaces de Riemann \`{a} bord et une caract\'{e}risation des courbes s\'{e}parantes},
   journal={Comment. Math. Helv.},
   volume={81},
   date={2006},
   number={4},
   pages={945--964},
}

\bib{Ger11}{article}{
   author={Gerhardt, Claus},
   title={Inverse curvature flows in hyperbolic space},
   journal={J. Differential Geom.},
   volume={89},
   date={2011},
   number={3},
   pages={487--527},
}

\bib{GKL21}{article}{
   author={Girouard, Alexandre},
   author={Karpukhin, Mikhail},
   author={Lagac\'{e}, Jean},
   title={Continuity of eigenvalues and shape optimisation for Laplace and Steklov problems},
   journal={Geom. Funct. Anal.},
   volume={31},
   date={2021},
   number={3},
   pages={513--561},
}

\bib{GLW25}{article}{
   author={Gu, Pingxin},
   author={Li, Haizhong},
   author={Wan, Yao},
   title={Weinstock inequality in hyperbolic space},
   journal={J. Funct. Anal.},
   volume={289},
   date={2025},
   pages={111155},
}

\bib{GP10}{article}{
   author={Girouard, A.},
   author={Polterovich, I.},
   title={On the Hersch-Payne-Schiffer estimates for the eigenvalues of the Steklov problem},
   journal={Funktsional. Anal. i Prilozhen.},
   volume={44},
   date={2010},
   number={2},
   pages={33--47},
}

\bib{GP12}{article}{
   author={Girouard, Alexandre},
   author={Polterovich, Iosif},
   title={Upper bounds for Steklov eigenvalues on surfaces},
   journal={Electron. Res. Announc. Math. Sci.},
   volume={19},
   date={2012},
   pages={77--85},
}

\bib{GP14}{article}{
   author={Girouard, Alexandre},
   author={Parnovski, Leonid},
   author={Polterovich, Iosif},
   author={Sher, David A.},
   title={The Steklov spectrum of surfaces: asymptotics and invariants},
   journal={Math. Proc. Cambridge Philos. Soc.},
   volume={157},
   date={2014},
   number={3},
   pages={379--389},
}

\bib{GP17}{article}{
   author={Girouard, Alexandre},
   author={Polterovich, Iosif},
   title={Spectral geometry of the Steklov problem (survey article)},
   journal={J. Spectr. Theory},
   volume={7},
   date={2017},
   number={2},
   pages={321--359},
}

\bib{Has11}{article}{
   author={Hassannezhad, Asma},
   title={Conformal upper bounds for the eigenvalues of the Laplacian and Steklov problem},
   journal={J. Funct. Anal.},
   volume={261},
   date={2011},
   number={12},
   pages={3419--3436},
}

\bib{HP68}{article}{
   author={Hersch, Joseph},
   author={Payne, Lawrence E.},
   title={Extremal principles and isoperimetric inequalities for some mixed problems of Stekloff's type},
   journal={Z. Angew. Math. Phys.},
   volume={19},
   date={1968},
   pages={802--817},
}

\bib{HPS75}{article}{
   author={Hersch, J.},
   author={Payne, L. E.},
   author={Schiffer, M. M.},
   title={Some inequalities for Stekloff eigenvalues},
   journal={Arch. Rational Mech. Anal.},
   volume={57},
   date={1975},
   pages={99--114},
}

\bib{HL01}{article}{
   author={Hislop, P. D.},
   author={Lutzer, C. V.},
   title={Spectral asymptotics of the Dirichlet-to-Neumann map on multiply connected domains in $\mathbb{R}^d$},
   journal={Inverse Problems},
   volume={17},
   date={2001},
   number={6},
   pages={1717--1741},
}

\bib{Hon21}{article}{
   author={Hong, Han},
   title={Higher dimensional surgery and Steklov eigenvalues},
   journal={J. Geom. Anal.},
   volume={31},
   date={2021},
   number={12},
   pages={11931--11951},
}

\bib{Kar17}{article}{
   author={Karpukhin, Mikhail},
   title={Bounds between Laplace and Steklov eigenvalues on nonnegatively curved manifolds},
   journal={Electron. Res. Announc. Math. Sci.},
   volume={24},
   date={2017},
   pages={100--109},
}

\bib{Kok14}{article}{
   author={Kokarev, Gerasim},
   title={Variational aspects of Laplace eigenvalues on Riemannian surfaces},
   journal={Adv. Math.},
   volume={258},
   date={2014},
   pages={191--239},
}


\bib{KW23}{article}{
   author={Kwong, Kwok-Kun},
   author={Wei, Yong},
   title={Geometric inequalities involving three quantities in warped product manifolds},
   journal={Adv. Math.},
   volume={430},
   date={2023},
   pages={Paper No. 109213, 28},
}

\bib{LL01}{book}{
   author={Lieb, Elliott H.},
   author={Loss, Michael},
   title={Analysis},
   series={Graduate Studies in Mathematics},
   volume={14},
   edition={2},
   publisher={American Mathematical Society, Providence, RI},
   date={2001},
   pages={xx2+346},
}

\bib{LU89}{article}{
   author={Lee, John M.},
   author={Uhlmann, Gunther},
   title={Determining anisotropic real-analytic conductivities by boundary measurements},
   journal={Comm. Pure Appl. Math.},
   volume={42},
   date={1989},
   number={8},
   pages={1097--1112},
}

\bib{Oss78}{article}{
   author={Osserman, Robert},
   title={The isoperimetric inequality},
   journal={Bull. Amer. Math. Soc.},
   volume={84},
   date={1978},
   number={6},
   pages={1182--1238},
}

\bib{Sch39}{article}{
   author={Schmidt, Erhard},
   title={\"{U}ber das isoperimetrische Problem im Raum von $n$ Dimensionen},
   journal={Math. Z.},
   volume={44},
   date={1939},
   number={1},
   pages={689--788},
}

\bib{Sch43}{article}{
   author={Schmidt, Erhard},
   title={Beweis der isoperimetrischen Eigenschaft der Kugel im hyperbolischen und sph\"{a}rischen Raum jeder Dimensionenzahl},
   journal={Math. Z.},
   volume={49},
   date={1943},
   pages={1--109},
}

\bib{Ste02}{article}{
   author={Stekloff, W.},
   title={Sur les probl\`{e}mes fondamentaux de la physique math\'{e}matique (suite et fin)},
   journal={Ann. Sci. \'{E}cole Norm. Sup. (3)},
   volume={19},
   date={1902},
   pages={455--490},
}

\bib{Ver21}{article}{
   author={Verma, Sheela},
   title={An isoperimetric inequality for the harmonic mean of the Steklov eigenvalues in hyperbolic space},
   journal={Arch. Math. (Basel)},
   volume={116},
   date={2021},
   number={2},
   pages={193--201},
}

\bib{Wei54}{article}{
   author={Weinstock, Robert},
   title={Inequalities for a classical eigenvalue problem},
   journal={J. Rational Mech. Anal.},
   volume={3},
   date={1954},
   pages={745--753},
}

\bib{Wei56}{article}{
   author={Weinberger, H. F.},
   title={An isoperimetric inequality for the $N$-dimensional free membrane problem},
   journal={J. Rational Mech. Anal.},
   volume={5},
   date={1956},
   pages={633--636},
}

\bib{YY17}{article}{
   author={Yang, Liangwei},
   author={Yu, Chengjie},
   title={A higher dimensional generalization of Hersch-Payne-Schiffer inequality for Steklov eigenvalues},
   journal={J. Funct. Anal.},
   volume={272},
   date={2017},
   number={10},
   pages={4122--4130},
}

\end{biblist}
\end{bibdiv}
\end{document}